\newtheorem{theorem}{Theorem}[section]
\newtheorem{corollary}[theorem]{Corollary}
\theoremstyle{definition}
\newtheorem{definition}[theorem]{Definition}
\newtheorem{example}[theorem]{Example}
\theoremstyle{remark}
\newtheorem{remark}[theorem]{Remark}
\numberwithin{equation}{section}
\title[Linear dynamics of cosine operator functions on Solid Spaces]
{Linear dynamics of cosine operator functions on solid Banach function spaces}
\author[S.M. Tabatabaie]{Seyyed Mohammad Tabatabaie}
\address{Department of Mathematics, University of Qom, Qom, Iran}
\email{sm.tabatabaie@qom.ac.ir}
\author[S. Ivkovi\'{c}]{Stefan Ivkovi\'{c}}
\address{The Mathematical Institute of the Serbian Academy of Sciences and Arts,
	p.p. 367, Kneza Mihaila 36, 11000 Beograd, Serbia}
\email{stefan.iv10@outlook.com }
\subjclass[2010]{47A16, 54H20}
\keywords{Chaos, Cosine operator function, Topological transitivity, topological mixing, aperiodic function, Banach function space.}
\date{\today}
\begin{document}

\maketitle

\begin{abstract}
In this paper, we give some sufficient and necessary conditions for cosine operator functions on solid Banach function spaces to be chaotic or topologically transitive.
\end{abstract}

\baselineskip17pt

%=================================================================
\section{Introduction}
%==================================================================
Linear dynamics of operators have been studied in many articles during several decades; see \cite{gpbook} and \cite{bmbook} as monographs on this topic.
Among other concepts, hypercyclicity, topological transitivity and topological mixing, as important linear dynamical properties of bounded linear operators, have been investigated in many research works; see \cite{an97, bg99, kuchen17, shaw, kostic} and their references.
%--------------------------------------------------------  
Specially, hypercyclic weighted shifts on $\ell^p(\Bbb{Z})$ were characterized in \cite{sa95,ge00}, and then  C-C. Chen and 
C-H. Chu, using aperiodic elements of locally compact groups, extended the results in \cite{sa95} to weighted translations on Lebesgue spaces in the context of a second countable group \cite{cc11}. Afterwards, many other linear dynamics in connection with this theme have been studied; see \cite{chen11, chen141, chta1, chta2, chta3}. Recently, the study of hypercyclicity  of operators on Orlicz spaces, as an extension of Lebesgue spaces, has been done; see \cite{aa17, ccot, cd18}.
%==============================================================
 In \cite{bmi08} a sufficient condition was given for a cosine operator function  generated by a strongly continuous translation group on some weighted Lebesgue space $L^p(\mathbb{R})$ to be topologically transitive. Also, in \cite{kalmes10} Kalmes characterized transitive and mixing cosine operator functions generated by second order partial differential operators on $L^p(\Omega,\mu)$, where $\Omega\subset \mathbb{R}^n$ and $\mu$ is a Borel measure on $\Omega$. 
 %==============================================
Then,  in \cite{chen141,chen15} C-C. Chen studied transitive cosine operator functions generated by a weighted translation operator on $L^p(G)$, where $G$ is a locally compact group, and gave a characterization of mixing cosine operator functions; see also \cite{chache}. In \cite{ctm3} we consider actions of locally compact groups on measure spaces and give some sufficient conditions for cosine operator functions generated by group actions, to be chaotic.
%--------------------------------------------------------------
We recall that a cosine operator function on a Banach space $\mathcal X$ is a mapping $\mathcal C$ from $\mathbb R$ into the space of all bounded linear  operators on $\mathcal{X}$ such that $\mathcal{C}(0) =I$ and  $2\mathcal{C}(t)\mathcal{C}(s) =\mathcal{C}(t+s)+\mathcal{C}(t-s)$ for all $s,t\in\mathbb{R}$. See \cite{but,sha} for the roles of this class of mappings for investigation of semigroups.
%==============================================================
%================================================================
Recently, we have studied dynamics of weighted translation operators on general solid Banach function spaces, and  have given a characterization for weighted translation operators to be disjoint hypercyclic and topologically mixing on solid Banach function spaces. In this paper, we continue this study by investigating some sufficient and necessary conditions for cosine operator functions on such general Banach function spaces to be chaotic or topologically transitive. The obtained results in addition to the previous modes, will cover new cases such as operators on tiled subspaces of Morrey spaces; see Example \ref{ex}. 
%=====================================================
 For convenience of readers, in section 2 we recall some definitions, notations and examples, and also introduce the new setting.  
%===================================================================
%====================================================================
\section{Preliminaries}
%=================================================================
If $\mathcal X$ is a Banach space, the set of all bounded linear operators from $\mathcal X$ into $\mathcal X$ is denoted by $B(\mathcal X)$. Also, we denote $\Bbb{N}_0:=\Bbb{N}\cup\{0\}$.
%===================================================================
\begin{definition}
	Let $\mathcal X$ be a Banach space. A sequence $(T_n)_{n\in \Bbb{N}_0}$  of operators in $B(\mathcal X)$ is called {\it topologically transitive} if for each non-empty open subsets $U,V$ of
	${\mathcal X}$, $T_n(U)\cap V\neq \varnothing$ for some $n\in \Bbb{N}$. If $T_n(U)\cap V\neq \varnothing$ holds from some $n$ onwards, then
	$(T_n)_{n\in \Bbb{N}_0}$ is called {\it topologically mixing}. 
	%An operator $T\in B(\mathcal X)$ is called {\it topologically transitive} ({\it topologically mixing}, respectively) if the sequence $(T^n)_{n\in \Bbb{N}_0}$ is  topologically transitive (topologically mixing, respectively), where $T^n$ is $n$-times combination of $T$ by itself.
\end{definition}
%==================================================================
\begin{definition}
	Let $\mathcal X$ be a Banach space. A sequence $(T_n)_{n\in \Bbb{N}_0}$  of operators in $B(\mathcal X)$ is called {\it hypercyclic} if there is an element $x\in\mathcal X$ (called \emph{hypercyclic vector}) such that the orbit $\mathcal O_x:=\{T_nx:\,n\in\mathbb N_0\}$ is dense in $\mathcal X$. The set of all hypercyclic vectors of a sequence $(T_n)_{n\in \Bbb{N}_0}$ is denoted by $HC((T_n)_{n\in \Bbb{N}_0})$. If $HC((T_n)_{n\in \Bbb{N}_0})$ is dense in $\mathcal X$, the sequence $(T_n)_{n\in \Bbb{N}_0}$ is called \emph{densely hypercyclic}. An operator $T\in B(\mathcal X)$ is called \emph{hypercyclic} if the sequence $(T^n)_{n\in \mathbb N_0}$ is hypercyclic.
\end{definition}
%=================================================================
Note that a sequence $(T_n)_{n\in \Bbb{N}_0}$  of operators in $B(\mathcal X)$ is topologically transitive if and only if it is densely hypercyclic \cite{gro}. Also, a Banach space admits a hypercyclic operator if and only if it is separable and infinite-dimensional \cite{an97,bg99}. So, in this paper we assume that Banach spaces are separable and infinite-dimensional.
%=================================================================================
\begin{definition}
	Let $\mathcal X$ be a Banach space, and $(T_n)_{n\in \Bbb{N}_0}$  be a sequence of operators in $B(\mathcal X)$. A vector $x\in \mathcal X$ is called a {\it periodic element} of $(T_n)_{n\in \Bbb{N}_0}$ if there exists a constant $N\in\mathbb N$ such that for each $k\in\mathbb N$, $T_{kN}x=x$. The set of all periodic elements of $(T_n)_{n\in \Bbb{N}_0}$ is denoted by
${\mathcal P}((T_n)_{n\in \Bbb{N}_0})$. The sequence $(T_n)_{n\in \Bbb{N}_0}$ is called {\it chaotic} if $(T_n)_{n\in \Bbb{N}_0}$ is topologically transitive and ${\mathcal P}((T_n)_{n\in \Bbb{N}_0})$ is dense in ${\mathcal X}$.
\end{definition}
%==================================================================

In sequel, the set of all Borel measurable complex-valued functions on a topological space $X$ and the set of all compactly supported continuous complex-valued functions on $X$ are denoted by $\mathcal M_0(X)$ and $C_c(X)$, respectively. Also, the support of each function $f:X\rightarrow\mathbb C$ is denoted by ${\rm supp}(f)$, and $\chi_A$ denotes the characteristic function of a set $A$.
%===================================================================
\begin{definition}
	Let $X$ be a topological space and $\mathcal{F}$ be a linear subspace of $\mathcal M_0(X)$. If $\mathcal F$ equipped with a given norm $\|\cdot\|_{\mathcal F}$ is a Banach space, we say that $\mathcal F$ is a \emph{Banach function space on $X$}.
\end{definition}
%================================================================
\begin{definition}
	Let $\mathcal{F}$ be a Banach function space on a topological space $X$, and  $\alpha:X\longrightarrow X $ be a Borel measurable bijection which its inverse $\alpha^{-1}$ is also Borel measurable. We say that $\mathcal{F}$ is \emph{$\alpha$-invariant} if for each 	$f\in \mathcal{F} $ we have 
	$f\circ \alpha^{\pm 1} \in \mathcal{F}$ and 	$\|f\circ \alpha^{\pm 1} \|_{\mathcal{F}}=\|f\|_{\mathcal{F}}$.
\end{definition}
%=============================================================
\begin{definition}
	A Banach function space $\mathcal{F}$ on $X$ is called \emph{solid} if for each $f\in \mathcal{F}$ and $g\in\mathcal M_0(X)$, satisfying $|g|\leq |f|$, we have $g \in \mathcal{F}$
	and $\|g\|_{\mathcal{F}}\leq \|f\|_{\mathcal{F}}$.
\end{definition}
%===================================================================
\begin{remark}\label{rem1}
	Suppose that a locally compact group $G$ continuously acts on a topological space $X$ by the mapping $(a,x)\mapsto ax$ from $G\times X$ into $X$, and $\mu$ is a Radon measure on $X$ which is invariant related to this action. Fix an element $a\in G$, and define 
	\begin{equation}
	\alpha_a:X\rightarrow X,\quad \alpha_a(x):=ax
	\end{equation}
	for all $x\in X$. 
	Then, for each Young function $\Phi$,  the Orlicz space $(L^\Phi(X,\mu),\|\cdot\|_\Phi)$ is an $\alpha_a$-invariant solid Banach function space on $X$, where $\|\cdot\|_\Phi$ is the Orlicz norm. Also, if $\Phi$ is $\Delta_2$-regular, then $C_c(X)$ is dense in $L^\Phi(X,\mu)$. For more details about Orlicz spaces we refer to monographs \cite{Rao, rao}.
\end{remark}
%===================================================================
Now, we are ready to define weighted translation operators on a solid space, which are generalizations of the usual weighted translations on Lebesgue or Orlicz spaces in the context of locally compact groups.
%====================================================================
\begin{definition}
	Let $X$ be a topological space and 	$\alpha:X\longrightarrow X$ be a Borel measurable bijection which its inverse is Borel measurable. Let $\mathcal{F}$ be an $\alpha$-invariant solid Banach function space on $X$. Let 
	$w:X\longrightarrow (0,\infty)$ be a bounded Borel measurable function (called \emph{weight}). Then, the operator $T_{\alpha , w}$  is defined on $\mathcal F$ by
	\begin{equation*}
		T_{\alpha , w}: \mathcal{F} \longrightarrow  \mathcal{F}, \quad
		T_{\alpha , w} f:= w \cdot (f\circ\alpha)
	\end{equation*}
	for all  $f\in \mathcal{F}$, and called a \emph{(generalized) weighted translation operator}.
\end{definition}
%--------------------------------------------------------
Easily, one can see that by the above assumptions $T_{\alpha , w}$ is well-defined and $\|T_{\alpha,w}\|\leq \|w\|_{\sup}$.  
%=======================================================
If $\frac{1}{w}$ is also bounded, then $T_{\alpha , w}$ is invertible and we have 
\begin{equation*}
	T_{\alpha , w}^{-1}f=\frac{f\circ \alpha^{-1}}{w\circ \alpha^{-1}},\quad(f\in\mathcal F).
\end{equation*}
Simply we denote $S_{\alpha , w}:=T_{\alpha , w}^{-1}$. Also, for each $n\in\mathbb{Z}$, we define
$$C^{(n)}_{\alpha,w}:=\dfrac{1}{2}(T^n_{\alpha,w}+S^n_{\alpha,w}).$$

Note that for each $n\in\mathbb{Z}$, $C^{(-n)}_{\alpha,w}=C^{(n)}_{\alpha,w}$. Setting $\mathcal{C}(n):=C^{(n)}_{\alpha,w}$ for all $n\in\mathbb{Z}$, one can consider this sequence as a cosine operator function  generated by above weighted translation operators on $\mathcal{F}$. In this paper, we study linear dynamics of the sequence $(C^{(n)}_{\alpha,w})_{n\in \mathbb{N}_0}$.
\begin{remark}
If $w$ and $\frac{1}{w}$ are weights, the inverse of a weighted translation operator  $T_{\alpha,w}$ is also a weighted translation operator. In fact, $ S_{\alpha , w}=T_{\alpha^{-1}, \frac{1}{w \circ \alpha^{-1}}}$. Moreover, if $T_{\alpha_{1}, w_{1}} $ and $T_{\alpha_{2}, w_{2}}  $ are two weighted translation operators, then 
$$T_{\alpha_{2}, w_{2}}\circ T_{\alpha_{1}, w_{1}} = T_{\alpha_{1}  \circ \alpha_{2} , w_{2}(w_{1} \circ \alpha_{2})},$$ so the composition of two weighted translation operators is again a weighted translation operator.
%--------------------------------------------------------
	By some calculation one can see that for each $n\in \mathbb{N}$ and $f\in \mathcal{F}$,
	$$T^{n}_{\alpha,w}f =\left(\prod_{j=0}^{n-1}(w\circ \alpha^{j})\right) \cdot (f\circ \alpha^{n})$$
	and 
	$$S^{n}_{\alpha,w}f =\left(\prod_{j=1}^{n}(w\circ \alpha^{-j})\right)^{-1}\cdot (f\circ \alpha^{-n}).$$
	 \end{remark}
 %========================================================
 The following concept plays a key role in the proofs.
%--------------------------------------------------------
\begin{definition}
	Let $X$ be a topological space. Let $\alpha:X\longrightarrow X$ be invertible, and $\alpha,\alpha^{-1}$ be Borel measurable. We say that $\alpha$ is \emph{aperiodic} if for each compact subset $K$ of $X$, there exists a constant $N>0$ such that for each $n\geq N$, we have $K \cap \alpha^{n}(K)=\varnothing$, where $\alpha^{n}$ means the $n$-fold combination of $\alpha$. 
\end{definition}
%--------------------------------------------------------
%===========================================================
\begin{example}\label{ex1}
	An element $a$ in a locally compact group $G$ is called {\it aperiodic} (or \emph{non-compact} in \cite{rossbook}) if the closed subgroup of $G$ generated by $a$ is not compact.   If  $G$ is second countable, then $a\in G$ is aperiodic if and only if $\alpha_a$ is an aperiodic function, where $\alpha_a$ is a function on $G$ defined  by $\alpha_a(x):=ax$ for all $x\in G$; see \cite[Lemma 2.1]{cc11}.  
	As mentioned in \cite{cc11}, in many locally compact groups such as the additive group $\Bbb R^n$, the Heisenberg group and the affine
	group, all elements except the identity are aperiodic.
\end{example}
%--------------------------------------------------------
%--------------------------------------------------------

For convenience, we collect some  required conditions on the Banach function space $\mathcal{F}$ below.

\begin{definition}\label{condition}
	Let $X$ be a topological space, $\mathcal F$ be a Banach function space on $X$, and $\alpha$ be a bijection from $X$ onto $X$ such that $\alpha$ and $\alpha^{-1}$ are Borel measurable. We say that $\mathcal F$ satisfies condition $\Omega_\alpha$ if:
	~\	\begin{enumerate}
		\item $\mathcal{F}$ is solid and $\alpha$-invariant;
		\item for each compact set	$E\subseteq X$ we have $\chi_{E}\in \mathcal{F}$;
		\item $\mathcal{F}_{bc}$ is dense in $\mathcal F$, where $\mathcal{F}_{bc}$ is the set of all bounded compactly supported functions in $\mathcal{F}$.
	\end{enumerate}
\end{definition}

\begin{example}
	Consider the notations and assumptions given in Remark \ref{rem1}. Let also $\Phi$ be a $\Delta_2$-regular Young function and $X$ be separable.  Then, the  Orlicz space $L^\Phi(X,\mu)$ satisfies the condition $\Omega_{\alpha_a}$; see \cite[Theorem 1 page 87]{Rao}.
\end{example}
%================================================================
\begin{example}\label{ex}
	Let $1\le q \le p < \infty$.
	The Morrey norm of each $L^{q}_{\rm loc}({\mathbb R}^n)$-function $f$ is defined by
	\begin{equation}\label{eq:130709-1A}
	\| f \|_{{\mathcal M}^p_q}
	:=
	\sup\limits_{(x,r) \in \mathbb R^n\times(0,\infty)}
	|B(x,r)|^{\frac{1}{p}-\frac{1}{q}}
	\left(
	\int_{B(x,r)}|f(y)|^{q}{\rm d}y
	\right)^{\frac{1}{q}},
	\end{equation}
	where  $B(x,r)$ is the open ball centered in $x$ with radius $r$. Then, the set of all $L^{q}_{\rm loc}({\mathbb R}^n)$-functions $f$ with $\| f \|_{{\mathcal M}^p_q}<\infty$ is denoted by ${\mathcal M}^p_q(\mathbb{R}^n)$ and is called a \emph{Morrey space}. Morrey spaces are generalization of usual Lebesgue spaces. In fact, for each $1\le p < \infty$ we have $\mathcal M_{p}^p(\mathbb R^n)=L^p(\mathbb R^n)$. For more details and references see \cite{Triebel14, Sawano17, SST11}. For each  $1\le q < p < \infty$, the Morrey space ${\mathcal M}^p_q(\mathbb{R}^n)$ is not separable.
	However, if we denote the ${\mathcal M}^p_q({\mathbb R}^n)$-closure 
	of $L^{\infty}_{\rm c}({\mathbb R}^n) \cap {\mathcal M}^p_q({\mathbb R}^n)$ by $\widetilde{\mathcal M}^p_q({\mathbb R}^n)$,
	where $L^{\infty}_{\rm c}({\mathbb R}^n)$ is
	the set of all functions in $L^{\infty}({\mathbb R}^n)$ with compact support, then the Banach space $\widetilde{\mathcal M}^p_q({\mathbb R}^n)$ is separable and infinite-dimensional. Also, $\left(\widetilde{\mathcal M}^p_q({\mathbb R}^n)-L^p({\mathbb R}^n)\right)\cup\{0\}$ contains an infinite-dimensional closed subspace of $\widetilde{\mathcal M}^p_q({\mathbb R}^n)$; see \cite{sata} for more details about this space. As in Example \ref{ex1}, for each non-zero element $a\in\mathbb{R}^n$, the function $\alpha_a$ defined by $\alpha_a(x):=x+a$ for all $x\in\mathbb{R}^n$
	is an aperiodic function, and the Banach space $(\widetilde{\mathcal M}^p_q({\mathbb R}^n),\|\cdot\|_{{\mathcal M}^p_q})$ satisfies the condition $\Omega_{\alpha_a}$.
\end{example}
%================================================================
\section{main results}
%=================================================================
The below result helps us to give a necessary condition for density of the set of all periodic elements of $(C_{\alpha,w}^{(n)})_{n\in \mathbb{N}_0}$.
%================================================================
\begin{theorem}\label{main}
	Let $\alpha$ be an aperiodic function on a topological space $X$, and $\mathcal F$ be a Banach function space on $X$ satisfying the condition $\Omega_{\alpha}$. Let $ w,\frac{1}{w}$ be weight functions on $X$. Then ${\rm (i)}\Rightarrow{\rm (ii)}$.
	\begin{enumerate}
		\item [{\rm(i)}] The set $\mathcal{P}((C_{\alpha,w}^{(n)})_{n\in \mathbb{N}_0})$ is dense in $\mathcal{F}$, and
		for each $f\in\mathcal{P}((C^{(n)}_{\alpha,w})_{n\in \mathbb{N}_0})$, we have $\lim_{n\rightarrow\infty}S^{n}_{\alpha,w}f=0$ in $\mathcal{F}$.
		\item [{\rm(ii)}] For each compact subset $K$ of $X$, there exist a sequence of Borel sets $(D_{k})_{k=1}^\infty$ in $K$
		and a strictly increasing sequence $(n_k)\subseteq\Bbb{N}$ such that
		$\lim_{k\rightarrow \infty}\|\chi_{K\setminus D_{k}}\|_{\mathcal F}=0$ and
		$$\lim_{k\rightarrow \infty}\left\|\chi_{D_{k}}\,\left(\prod_{s=1}^{n_k}w\circ\alpha^{-s}\right)\right\|_{\mathcal F}=0.$$
	\end{enumerate}
\end{theorem}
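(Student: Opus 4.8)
The plan is to prove $(i)\Rightarrow(ii)$ directly: for a fixed compact set $K$ I will use the density of $\mathcal P((C^{(n)}_{\alpha,w})_{n\in\mathbb{N}_0})$ to manufacture a sequence of periodic approximants of $\chi_K$, and then read off the two limits from solidity together with the decay hypothesis. First I would fix a compact $K\subseteq X$; by part (2) of condition $\Omega_\alpha$ we have $\chi_K\in\mathcal F$, so density lets me choose, for each $k\in\mathbb{N}$, a periodic element $f_k\in\mathcal P((C^{(n)}_{\alpha,w})_{n\in\mathbb{N}_0})$ with $\|f_k-\chi_K\|_{\mathcal F}<2^{-k}$; write $N_k$ for a period of $f_k$, so that $C^{(mN_k)}_{\alpha,w}f_k=f_k$ for every $m\in\mathbb{N}$.

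For the first limit I would set $D_k:=\{x\in K:\,|f_k(x)|\ge\tfrac12\}$, a Borel subset of $K$. On $K\setminus D_k$ one has $1\le 2\,|1-f_k|=2\,|\chi_K-f_k|$ pointwise, hence $\chi_{K\setminus D_k}\le 2\,|\chi_K-f_k|$; solidity then gives $\|\chi_{K\setminus D_k}\|_{\mathcal F}\le 2\|f_k-\chi_K\|_{\mathcal F}<2^{1-k}$, so $\|\chi_{K\setminus D_k}\|_{\mathcal F}\to0$ as $k\to\infty$.

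For the second limit I would feed each periodic $f_k$ into the decay hypothesis. Using the $\alpha$-invariance of $\|\cdot\|_{\mathcal F}$ to move the composition operators through the norm, the relevant power of the generating operator applied to $f_k$ has norm equal to $\big\|\big(\prod_{s=1}^{n}w\circ\alpha^{-s}\big)f_k\big\|_{\mathcal F}$; combined with the periodicity relation $C^{(mN_k)}_{\alpha,w}f_k=f_k$ and the convergence $\lim_{n}S^{n}_{\alpha,w}f_k=0$, this quantity tends to $0$ as $n\to\infty$. I would therefore pick, recursively, a strictly increasing sequence $(n_k)\subseteq\mathbb{N}$ with $\big\|\big(\prod_{s=1}^{n_k}w\circ\alpha^{-s}\big)f_k\big\|_{\mathcal F}<2^{-k}$. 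Finally, since $|f_k|\ge\tfrac12$ on $D_k$ and $\prod_{s=1}^{n_k}w\circ\alpha^{-s}$ is nonnegative, I have the pointwise bound $\chi_{D_k}\prod_{s=1}^{n_k}w\circ\alpha^{-s}\le 2\,|f_k|\prod_{s=1}^{n_k}w\circ\alpha^{-s}$, whence solidity yields $\big\|\chi_{D_k}\prod_{s=1}^{n_k}w\circ\alpha^{-s}\big\|_{\mathcal F}\le 2^{1-k}\to0$, which is the second limit.

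The main obstacle is the middle step: extracting, from the abstract decay of the operator orbit of $f_k$, the decay of the \emph{explicit} backward product $\prod_{s=1}^{n}w\circ\alpha^{-s}$ against $f_k$. This rests on the $\alpha$-invariance identity that rewrites the norm of the pertinent power $T^{n}_{\alpha,w}f_k$ as $\big\|\big(\prod_{s=1}^{n}w\circ\alpha^{-s}\big)f_k\big\|_{\mathcal F}$, and on combining the periodicity $C^{(mN_k)}_{\alpha,w}f_k=f_k$ with $\lim_nS^{n}_{\alpha,w}f_k=0$ to control that power; pinning down precisely which power and identity produce the backward (rather than forward) product is the heart of the argument. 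A secondary subtlety is that periodic elements of a cosine family need not be compactly supported, so one cannot localise by support; every estimate must instead be carried out in $\|\cdot\|_{\mathcal F}$ via solidity. The aperiodicity of $\alpha$ enters only to guarantee that the orbit genuinely escapes $K$, which is what allows the operator decay to be detected on $K$ and the sequence $(n_k)$ to be taken strictly increasing.
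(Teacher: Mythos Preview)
Your proposal has a genuine gap in the ``middle step'', and it is not a matter of pinning down which identity to use: the conclusion you aim for is simply false under the hypotheses you invoke. You correctly note (via $\alpha$-invariance) that
\[
\Big\|\Big(\prod_{s=1}^{n}w\circ\alpha^{-s}\Big)f_k\Big\|_{\mathcal F}=\|T^{n}_{\alpha,w}f_k\|_{\mathcal F}.
\]
But from $C^{(mN_k)}_{\alpha,w}f_k=f_k$ and $S^{n}_{\alpha,w}f_k\to0$ one gets $T^{mN_k}_{\alpha,w}f_k=2f_k-S^{mN_k}_{\alpha,w}f_k\to 2f_k$, so along the only indices you control, $\|T^{mN_k}_{\alpha,w}f_k\|_{\mathcal F}\to 2\|f_k\|_{\mathcal F}\approx 2\|\chi_K\|_{\mathcal F}$, which is typically positive, not $0$. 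There is no mechanism in your outline that forces $\|T^{n}_{\alpha,w}f_k\|_{\mathcal F}$ to be small for any $n$, and hence no way to choose $(n_k)$ as you describe. Your final remark that aperiodicity ``enters only to guarantee that the orbit genuinely escapes $K$'' underestimates its role.

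What the paper does differently, and why it works: starting from $\|T^{n_k}_{\alpha,w}f_k+S^{n_k}_{\alpha,w}f_k-2\chi_K\|_{\mathcal F}=2\|f_k-\chi_K\|_{\mathcal F}$ small, it multiplies by $\chi_{\alpha^{-n_k}(K)}$. Aperiodicity (with $n_k>N$) kills the $\chi_K$ term, and after shifting by $\alpha^{n_k}$ and factoring one obtains a bound on
\[
\Big\|\chi_K\Big(\prod_{s=1}^{n_k}w\circ\alpha^{-s}\Big)\big(f_k+S^{2n_k}_{\alpha,w}f_k\big)\Big\|_{\mathcal F}.
\]
Only now does the decay hypothesis $S^{n}_{\alpha,w}f_k\to0$ enter, to arrange $\|f_k+S^{2n_k}_{\alpha,w}f_k-\chi_K\|_{\mathcal F}$ small; accordingly $D_k$ is defined as the subset of $K$ where $|f_k+S^{2n_k}_{\alpha,w}f_k-1|\le 2^{-k}$, so that this factor is bounded below by $1-2^{-k}$ on $D_k$ and can be removed. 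In short, aperiodicity is what converts a statement about $\|f_k-\chi_K\|_{\mathcal F}$ (not about $\|T^{n}f_k\|_{\mathcal F}$) into a bound on the backward product over $K$; your definition of $D_k$ via $|f_k|\ge\tfrac12$ and your attempt to use $\|T^{n}f_k\|_{\mathcal F}$ directly both need to be replaced by this localization-and-shift argument.
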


\begin{proof}
	Assume that (i) holds. Let $K$ be a compact subset of $X$. Since $\alpha$ is an aperiodic function on $X$, there is a constant $N>0$ such that for each $n>N$,  $\alpha^{n}(K)\cap K=\varnothing ,$ which also gives $\alpha^{-n}(K) \cap K=\varnothing $ for all $n > N.$ By the assumption, there are a sequence $(n_k)\subseteq \mathbb{N}$ and a sequence $ (f_k)\subseteq \mathcal{F}$ with $N<n_k<n_{k+1}$ for all $k\in\mathbb{N}$, such that $\|f_k-\chi_K\|_{\mathcal F}<\dfrac{1}{4^{k}}$, $\|f_k+S_{\alpha,w}^{2n_k}f_k-\chi_K\|_{\mathcal F}<\dfrac{1}{4^k}$ and $C^{(n_k)}_{\alpha,w}f_k=f_k$.	
	For each $k=1,2,\ldots$, put $$D_k:=\{x\in K:\, \left|f_k(x)+S_{\alpha,w}^{2n_k}f_k(x)-1\right|\leq\dfrac{1}{2^k}\}.$$
	Then, we have
	\begin{align*}
		\dfrac{1}{2^{k}}\|\chi_{K\setminus D_k}\|_{\mathcal F}&\leq \left\|\chi_{K\setminus D_k}\cdot\left(f_k(x)+S_{\alpha,w}^{2n_k}f_k(x)-1\right)\right\|_{\mathcal F}\\
		&\leq \left\|f_k+S_{\alpha,w}^{2n_k}f_k-\chi_K\right\|_{\mathcal F}< \dfrac{1}{4^{k}}.
	\end{align*}
	
	This implies that $\|\chi_{K\setminus D_k}\|_{\mathcal F}<\dfrac{1}{2^{k}}$, and so $\lim_{k\rightarrow \infty}\|\chi_{K\setminus D_k}\|_{\mathcal F}=0$. On the other hand, since $C^{(n_k)}_{\alpha,w}f_k=f_k$, we have
	
	\begin{align*}
		\frac{2}{4^k}&\geq 2\left\|f_k-\chi_{K}\right\|_{\mathcal F}\\
		&=\left\|T^{n_k}_{\alpha,w}f_k+S^{n_k}_{\alpha,w}f_k-2\chi_K\right\|_{\mathcal F}\\
		%&\qquad+\int_{E}|(T^{n_k}_{\alpha,w}f_k)(a^{-n_k}x)\\&\qquad\qquad\qquad+(S^{n_k}_{\alpha,w}f_k)(a^{-n_k}x)-\chi_E(x))|^pd\mu(x)\\
		&=\left\|\left(\prod_{s=0}^{n_k-1}w\circ\alpha^s\right)\cdot\left(f_k\circ\alpha^{n_k}\right)+\left(\prod_{s=1}^{n_k}w\circ \alpha^{-s}\right)^{-1}\,(f_k\circ\alpha^{-n_k})-2\chi_K\right\|_{\mathcal F}\\
		&\geq\left\|\chi_{\alpha^{-n_k}(K)}\cdot\left[\left(\prod_{s=0}^{n_k-1}w\circ\alpha^s\right)\cdot\left(f_k\circ\alpha^{n_k}\right)+\left(\prod_{s=1}^{n_k}w\circ \alpha^{-s}\right)^{-1}\,(f_k\circ\alpha^{-n_k})-2\chi_K\right]\right\|_{\mathcal F}\\
		&=\left\|\chi_{\alpha^{-n_k}(K)}\cdot\left[\left(\prod_{s=0}^{n_k-1}w\circ\alpha^s\right)\cdot\left(f_k\circ\alpha^{n_k}\right)+\left(\prod_{s=1}^{n_k}w\circ \alpha^{-s}\right)^{-1}\,(f_k\circ\alpha^{-n_k})\right]\right\|_{\mathcal F}\\
		&=\left\|(\chi_{\alpha^{-n_k}(K)}\circ\alpha^{-n_k})\cdot\left[\left(\prod_{s=0}^{n_k-1}w\circ\alpha^s\circ\alpha^{-n_k}\right)\cdot f_k+\left(\prod_{s=1}^{n_k}w\circ \alpha^{-s}\circ\alpha^{-n_k}\right)^{-1}\,(f_k\circ \alpha^{-2n_k})\right]\right\|_{\mathcal F}\\
		&=\left\|\chi_{K}\cdot\left[\left(\prod_{s=1}^{n_k}w\circ\alpha^{-s}\right)\cdot f_k+\left(\prod_{s=1}^{n_k}w\circ \alpha^{-(s+n_k)}\right)^{-1}\,(f_k\circ \alpha^{-2n_k})\right]\right\|_{\mathcal F}.
		\end{align*}
		
		This implies that 
		
		\begin{align*}
		\frac{2}{4^k}&\geq\left\|\chi_{K}\cdot\left[\left(\prod_{s=1}^{n_k}w\circ\alpha^{-s}\right)\cdot f_k+\left(\prod_{s=1}^{n_k}w\circ \alpha^{-(s+n_k)}\right)^{-1}\,(f_k\circ \alpha^{-2n_k})\right]\right\|_{\mathcal F}\\&=\left\|\chi_{K}\cdot\left(\prod_{s=1}^{n_k}w\circ\alpha^{-s}\right)\cdot \left[f_k+\left(\prod_{s=1}^{2n_k}w\circ \alpha^{-s}\right)^{-1}\,(f_k\circ \alpha^{-2n_k})\right]\right\|_{\mathcal F}\\
		&=\left\|\chi_{K}\cdot\left(\prod_{s=1}^{n_k}w\circ\alpha^{-s}\right)\cdot \left(f_k+S_{\alpha,w}^{2n_k}f_k\right)\right\|_{\mathcal F}\\
		&\geq\left\|\chi_{D_k}\cdot\left(\prod_{s=1}^{n_k}w\circ\alpha^{-s}\right)\cdot \left(f_k+S_{\alpha,w}^{2n_k}f_k\right)\right\|_{\mathcal F}\\
		&\geq \left(1-\frac{1}{2^{k}}\right)\left\|\chi_{D_k}\cdot\left(\prod_{s=1}^{n_k}w\circ\alpha^{-s}\right)\right\|_{\mathcal F}\\
	\end{align*}
	This completes the proof.
\end{proof}
%===========================================================
\begin{corollary}\label{cor1}
	Let $\alpha$ be an aperiodic function on a topological space $X$, and $\mathcal F$ be a Banach function space on $X$ satisfying the condition $\Omega_{\alpha}$. Suppose that $X$ has a compact subset $K$ with $\|\chi_K\|_{\mathcal F}>0$, and let $ w,\frac{1}{w}$ be weight functions on $X$. If $\inf_{x\in X}w(x)>1$, 
then $\mathcal P((C_{\alpha,w}^{(n)})_{n\in \mathbb{N}_0})$ 
is not dense in $\mathcal{F}$, and so $(C^{(n)}_{\alpha,w})_{n\in \mathbb{N}_0}$ is not chaotic.
\end{corollary}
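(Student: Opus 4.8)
The plan is to argue by contradiction and reduce everything to Theorem \ref{main}. Suppose, contrary to the claim, that $\mathcal{P}((C_{\alpha,w}^{(n)})_{n\in \mathbb{N}_0})$ is dense in $\mathcal{F}$. Set $c:=\inf_{x\in X}w(x)$, so that by hypothesis $c>1$. The goal is then to verify that hypothesis (i) of Theorem \ref{main} holds, invoke the implication ${\rm(i)}\Rightarrow{\rm(ii)}$, and finally contradict (ii) using the compact set $K$ with $\|\chi_K\|_{\mathcal F}>0$.

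The first half of (i) is exactly the density assumption, so only the limit condition needs to be checked. I would in fact show the stronger statement that $S^{n}_{\alpha,w}f\to 0$ for \emph{every} $f\in\mathcal F$, not merely for periodic $f$. Using the formula $S^{n}_{\alpha,w}f=\left(\prod_{s=1}^{n}w\circ\alpha^{-s}\right)^{-1}(f\circ\alpha^{-n})$ together with $w\circ\alpha^{-s}\ge c$, the scalar factor is bounded above by $c^{-n}$, whence $|S^{n}_{\alpha,w}f|\le c^{-n}\,|f\circ\alpha^{-n}|$ pointwise. Solidity then gives $\|S^{n}_{\alpha,w}f\|_{\mathcal F}\le c^{-n}\,\|f\circ\alpha^{-n}\|_{\mathcal F}$, and $\alpha$-invariance rewrites the right-hand side as $c^{-n}\|f\|_{\mathcal F}$. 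Since $c>1$, this tends to $0$, which establishes the required limit and hence all of (i).

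Now Theorem \ref{main} supplies (ii) for the chosen compact set $K$: there are Borel sets $D_k\subseteq K$ and a strictly increasing sequence $(n_k)\subseteq\mathbb N$ with $\|\chi_{K\setminus D_k}\|_{\mathcal F}\to 0$ and $\left\|\chi_{D_k}\left(\prod_{s=1}^{n_k}w\circ\alpha^{-s}\right)\right\|_{\mathcal F}\to 0$. The contradiction comes from the same lower bound on the weight product: since $\prod_{s=1}^{n_k}w\circ\alpha^{-s}\ge c^{n_k}\ge 1$, solidity gives $\|\chi_{D_k}\|_{\mathcal F}\le\left\|\chi_{D_k}\left(\prod_{s=1}^{n_k}w\circ\alpha^{-s}\right)\right\|_{\mathcal F}\to 0$. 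Because $\chi_K=\chi_{D_k}+\chi_{K\setminus D_k}$ with disjoint supports, the triangle inequality yields $\|\chi_K\|_{\mathcal F}\le\|\chi_{D_k}\|_{\mathcal F}+\|\chi_{K\setminus D_k}\|_{\mathcal F}\to 0$, forcing $\|\chi_K\|_{\mathcal F}=0$ and contradicting $\|\chi_K\|_{\mathcal F}>0$. Hence the periodic set cannot be dense, and since chaoticity requires that density, $(C^{(n)}_{\alpha,w})_{n\in \mathbb{N}_0}$ is not chaotic.

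The only genuinely delicate point is confirming hypothesis (i) before the theorem can be applied; I expect the verification of $S^{n}_{\alpha,w}f\to 0$ to be the step requiring the most care, though under $c>1$ it collapses to the elementary geometric decay above. Everything else is a direct combination of solidity, $\alpha$-invariance, and the pointwise lower bound $\prod_{s=1}^{n_k}w\circ\alpha^{-s}\ge c^{n_k}$.
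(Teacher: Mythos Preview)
Your proof is correct and follows essentially the same approach as the paper: both verify that $\|S^n_{\alpha,w}f\|_{\mathcal F}\le c^{-n}\|f\|_{\mathcal F}$ via solidity and $\alpha$-invariance, then invoke Theorem~\ref{main} and contradict condition~(ii) using the lower bound $\prod_{s=1}^{n_k}w\circ\alpha^{-s}\ge c^{n_k}$ on the compact set $K$ with $\|\chi_K\|_{\mathcal F}>0$. The only cosmetic difference is that the paper phrases the contradiction as $\|\chi_{D_k}\prod_{s=1}^{n_k}w\circ\alpha^{-s}\|_{\mathcal F}\ge c^{n_k}\|\chi_{D_k}\|_{\mathcal F}\to\infty$, whereas you deduce $\|\chi_{D_k}\|_{\mathcal F}\to 0$ and then $\|\chi_K\|_{\mathcal F}=0$; both routes rely on the same triangle-inequality splitting $\chi_K=\chi_{D_k}+\chi_{K\setminus D_k}$.
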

\begin{proof}
	For each $n\in\mathbb{N}$ and $f\in \mathcal{F}$,
	\begin{align*}
		\|S^{n}_{\alpha,w}f\|_{\mathcal F}&=\left\| \frac{1}{\prod_{s=1}^{n}w\circ\alpha^{-s}}\,(f\circ\alpha^{-n})\right\|_{\mathcal F}\\
		&=\left\| \frac{1}{\prod_{s=1}^{n}w\circ\alpha^{-s}\circ\alpha^n}\,f\right\|_{\mathcal F}\\
		&\leq \frac{1}{\left(\inf_{x\in X}w(x)\right)^{n}}\,\|f\|_{\mathcal F}.
	\end{align*}
	Hence, for each $f\in \mathcal{F}$, we have
	\begin{equation*}
		\lim_{n\rightarrow\infty}S^{n}_{\alpha,w}f=0
	\end{equation*}
	in $\mathcal{F}$.
	On the other hand,
	assume that $K$ is a compact subset of $X$ with $\|\chi_K\|_{\mathcal F}>0$, $(D_{k})_{k=1}^\infty$ is a sequence of Borel subsets of $K$ with
	$\lim_{k\rightarrow \infty}\|\chi_{K\setminus D_{k}}\|_{\mathcal F}=0$ and $(n_k)_{k=1}^\infty\subseteq\mathbb{N}$ is a strictly increasing sequence.
	Then
	\begin{equation*}
		\|\chi_{D_k}\|_{\mathcal F}\,\left(\inf_{x\in X}w(x)\right)^{n_k}\leq\left\|\chi_{D_{k}}\left(\prod_{s=1}^{n_k}w\circ\alpha^{-s}\right)\right\|_{\mathcal F},
	\end{equation*}
	and so 
	\begin{equation*}
		\lim_{k\rightarrow \infty}\left\|\chi_{D_{k}}\left(\prod_{s=1}^{n_k}w\circ\alpha^{-s}\right)\right\|_{\mathcal F}=\infty,
	\end{equation*}
	
	So, by Theorem \ref{main}, the proof is complete.
\end{proof}
%===================================================
\begin{theorem}\label{main2}
	Let $\alpha$ be an aperiodic function on a topological space $X$, and $\mathcal F$ be a Banach function space on $X$ satisfying the condition $\Omega_{\alpha}$. Let $ w,\frac{1}{w}$ be weight functions on $X$. Then, ${\rm (i)}\Rightarrow{\rm (ii)}$.
	\begin{enumerate}
		\item [{\rm(i)}] The set $\mathcal{P}((C_{\alpha,w}^{(n)})_{n\in \mathbb{N}_0})$ is dense in $\mathcal{F}$, and
		for each $f\in\mathcal{P}((C^{(n)}_{\alpha,w})_{n\in \mathbb{N}_0})$,  $\lim_{n\rightarrow\infty}T^{n}_{\alpha,w}f=0$ in $\mathcal{F}$.
		\item [{\rm(ii)}] For each compact set $K\subseteq X$, there are a sequence of Borel sets $(D_{k})_{k=1}^\infty$ in $K$
		and a strictly increasing sequence $(n_k)\subset\Bbb{N}$ such that
		$\lim_{k\rightarrow \infty}\|\chi_{K\setminus D_k}\|_{\mathcal F}=0$ and
		$$\lim_{k\rightarrow \infty}\left\|\chi_{D_{k}}\cdot\left(\prod_{s=0}^{n_k-1}w\circ\alpha^{s}\right)^{-1}\right\|_{\mathcal F}=0.$$
	\end{enumerate}
\end{theorem}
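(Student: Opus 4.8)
The plan is to mirror the proof of Theorem \ref{main}, interchanging the roles of $T_{\alpha,w}$ and $S_{\alpha,w}$ throughout: the hypothesis $\lim_n T^n_{\alpha,w}f=0$ replaces $\lim_n S^n_{\alpha,w}f=0$, and the restriction to $\alpha^{n_k}(K)$ replaces that to $\alpha^{-n_k}(K)$. First I would fix a compact set $K\subseteq X$ and invoke aperiodicity to produce $N>0$ with $\alpha^n(K)\cap K=\varnothing$ for all $n>N$. Using the density of $\mathcal P((C^{(n)}_{\alpha,w})_{n\in\mathbb N_0})$ I would pick periodic elements $f_k$ with $\|f_k-\chi_K\|_{\mathcal F}<\tfrac{1}{4^k}$; taking $n_k$ to be a sufficiently large multiple of the period of $f_k$ (exceeding $N$) guarantees $C^{(n_k)}_{\alpha,w}f_k=f_k$, while the hypothesis $T^n_{\alpha,w}f_k\to 0$ lets me also enforce $\|f_k+T^{2n_k}_{\alpha,w}f_k-\chi_K\|_{\mathcal F}<\tfrac{1}{4^k}$ and $n_k<n_{k+1}$.

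Next I would set $D_k:=\{x\in K:\ |f_k(x)+T^{2n_k}_{\alpha,w}f_k(x)-1|\le \tfrac{1}{2^k}\}$. Exactly as in Theorem \ref{main}, solidity gives $\tfrac{1}{2^k}\|\chi_{K\setminus D_k}\|_{\mathcal F}\le \|f_k+T^{2n_k}_{\alpha,w}f_k-\chi_K\|_{\mathcal F}<\tfrac{1}{4^k}$, whence $\lim_k\|\chi_{K\setminus D_k}\|_{\mathcal F}=0$.

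For the product estimate I would start from the cosine identity $C^{(n_k)}_{\alpha,w}f_k=f_k$, which yields $\|T^{n_k}_{\alpha,w}f_k+S^{n_k}_{\alpha,w}f_k-2\chi_K\|_{\mathcal F}=2\|f_k-\chi_K\|_{\mathcal F}<\tfrac{2}{4^k}$. Multiplying by $\chi_{\alpha^{n_k}(K)}$ annihilates the $\chi_K$ term (since $\alpha^{n_k}(K)\cap K=\varnothing$), and then applying $\alpha$-invariance by composing with $\alpha^{n_k}$ turns $\chi_{\alpha^{n_k}(K)}$ into $\chi_K$ and rewrites the two operator terms after reindexing the weight products. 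The $S^{n_k}_{\alpha,w}$-term produces the factor $\left(\prod_{s=0}^{n_k-1}w\circ\alpha^{s}\right)^{-1}$, while the $T^{n_k}_{\alpha,w}$-term combines the remaining weight blocks into $\prod_{s=0}^{2n_k-1}w\circ\alpha^{s}$, i.e. exactly the weight of $T^{2n_k}_{\alpha,w}$; factoring out $\left(\prod_{s=0}^{n_k-1}w\circ\alpha^{s}\right)^{-1}$ leaves the bracket $f_k+T^{2n_k}_{\alpha,w}f_k$. Restricting to $D_k$ (where this bracket is at least $1-\tfrac{1}{2^k}$) and using solidity gives
\[
\frac{2}{4^k}\ \ge\ \Bigl(1-\frac{1}{2^k}\Bigr)\left\|\chi_{D_k}\left(\prod_{s=0}^{n_k-1}w\circ\alpha^{s}\right)^{-1}\right\|_{\mathcal F},
\]
so the required limit is $0$.

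The main obstacle I anticipate is purely bookkeeping: correctly tracking how the finite weight products $\prod_{s}w\circ\alpha^{s}$ transform under the substitution governed by composition with $\alpha^{n_k}$, and verifying that the two half-length products amalgamate into the single product $\prod_{s=0}^{2n_k-1}w\circ\alpha^{s}$ attached to $T^{2n_k}_{\alpha,w}$. The one genuinely new choice relative to Theorem \ref{main} is to restrict to $\alpha^{n_k}(K)$ rather than $\alpha^{-n_k}(K)$, which is precisely what isolates the $T$-direction product $\left(\prod_{s=0}^{n_k-1}w\circ\alpha^{s}\right)^{-1}$ appearing in (ii).
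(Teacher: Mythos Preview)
Your proposal is correct and follows essentially the same route as the paper's proof: the paper also chooses periodic $f_k$ near $\chi_K$ with $C^{(n_k)}_{\alpha,w}f_k=f_k$ and $\|f_k+T^{2n_k}_{\alpha,w}f_k-\chi_K\|_{\mathcal F}<4^{-k}$, defines $D_k$ exactly as you do, restricts to $\chi_{\alpha^{n_k}(K)}$, composes with $\alpha^{n_k}$, reindexes the weight products, and factors out $\bigl(\prod_{s=0}^{n_k-1}w\circ\alpha^{s}\bigr)^{-1}$ to obtain the same final inequality. Your explicit description of how $n_k$ is chosen (as a large multiple of the period of $f_k$, using $T^{n}_{\alpha,w}f_k\to 0$) merely spells out what the paper leaves implicit in the phrase ``by the assumption''.
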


\begin{proof}
	The proof is similar to the proof of Theorem \ref{main}, but some details are different. Let $K$ be a compact subset of $X$.
	Since $\alpha$ is an aperiodic function on $X$, there is $ N > 0 $ such that for each $ n > N $,  $\alpha^{n}(K)\cap K=\varnothing $. By the assumption, there are a sequence $(n_k)\subset \mathbb{N}$ and a sequence $ (f_k)\subseteq \mathcal{F}$ with $N<n_1<n_2<\ldots$, such that $\|f_k-\chi_K\|_{\mathcal F}<\dfrac{1}{4^{k}}$, $\|f_k+T_{\alpha,w}^{2n_k}f_k-\chi_K\|_{\mathcal F}<\dfrac{1}{4^k}$ and $C^{(n_k)}_{\alpha,w}f_k=f_k$.
	For each $k=1,2,\ldots$, put $$D_k:=\{x\in K:\, \left|T^{2n_k}_{\alpha,w}f_k(x)+f_k(x)-1\right|\leq\dfrac{1}{2^k}\}.$$
	Then, we have
	\begin{align*}
		\dfrac{1}{2^{k}}\|\chi_{K\setminus D_k}\|_{\mathcal F}&\leq \|\chi_{K\setminus D_k}\cdot\left(T_{\alpha,w}^{2n_k}f_k(x)+f_k(x)-1\right)\|_{\mathcal F}\\
		&\leq \|T_{\alpha,w}^{2n_k}f_k(x)+f_k(x)-1\|_{\mathcal F}< \dfrac{1}{4^{k}}.
	\end{align*}
	
	This implies that $\lim_{k\rightarrow \infty}\|\chi_{K\setminus D_k}\|_{\mathcal F}=0$. Since $C^{(n_k)}_{\alpha,w}f_k=f_k$,
	\begin{align*}
		\frac{2}{4^k}&\geq2\left\|f_k-\chi_{K}\right\|_{\mathcal F}\\
		&=\left\|T^{n_k}_{\alpha,w}f_k+S^{n_k}_{\alpha,w}f_k-2\chi_K\right\|_{\mathcal F}\\
		%&\qquad+\int_{E}|(T^{n_k}_{\alpha,w}f_k)(a^{-n_k}x)\\&\qquad\qquad\qquad+(S^{n_k}_{\alpha,w}f_k)(a^{-n_k}x)-\chi_E(x))|^pd\mu(x)\\
		&=\left\|\left(\prod_{s=0}^{n_k-1}w\circ\alpha^s\right)\cdot(f_k\circ\alpha^{n_k})+\left(\prod_{s=1}^{n_k}w\circ\alpha^{-s}\right)^{-1}\,(f_k\circ\alpha^{-n_k})-2\chi_K\right\|_{\mathcal F}\\
		&\geq\left\|\chi_{\alpha^{n_k}(K)}\cdot\left[\left(\prod_{s=0}^{n_k-1}w\circ\alpha^s\right)\cdot(f_k\circ\alpha^{n_k})+\left(\prod_{s=1}^{n_k}w\circ\alpha^{-s}\right)^{-1}\,(f_k\circ\alpha^{-n_k})-2\chi_K\right]\right\|_{\mathcal F}\\
		&=\left\|\chi_{\alpha^{n_k}(K)}\cdot\left[\left(\prod_{s=0}^{n_k-1}w\circ\alpha^s\right)\cdot(f_k\circ\alpha^{n_k})+\left(\prod_{s=1}^{n_k}w\circ\alpha^{-s}\right)^{-1}\,(f_k\circ\alpha^{-n_k})\right]\right\|_{\mathcal F}\\
		&=\left\|\chi_{\alpha^{n_k}(K)}\circ\alpha^{n_k}\cdot\left[\left(\prod_{s=0}^{n_k-1}w\circ\alpha^s\circ\alpha^{n_k}\right)\cdot(f_k\circ\alpha^{2n_k})+\left(\prod_{s=1}^{n_k}w\circ\alpha^{-s}\circ\alpha^{n_k}\right)^{-1}\,f_k\right]\right\|_{\mathcal F}\\
		&=\left\|\chi_{K}\cdot\left[\left(\prod_{s=0}^{n_k-1}w\circ\alpha^s\circ\alpha^{n_k}\right)\cdot(f_k\circ\alpha^{2n_k})+\left(\prod_{s=1}^{n_k}w\circ\alpha^{-s}\circ\alpha^{n_k}\right)^{-1}\,f_k\right]\right\|_{\mathcal F}\\
		&=\left\|\chi_{K}\cdot\left[\left(\prod_{s=0}^{n_k-1}w\circ\alpha^s\circ\alpha^{n_k}\right)\cdot(f_k\circ\alpha^{2n_k})+\left(\prod_{s=0}^{n_k-1}w\circ\alpha^{s}\right)^{-1}\,f_k\right]\right\|_{\mathcal F}.
		\end{align*}
		
		This implies that 
		
		\begin{align*}
		\frac{2}{4^k}&\geq \left\|\chi_{K}\cdot\left[\left(\prod_{s=0}^{n_k-1}w\circ\alpha^s\circ\alpha^{n_k}\right)\cdot(f_k\circ\alpha^{2n_k})+\left(\prod_{s=0}^{n_k-1}w\circ\alpha^{s}\right)^{-1}\,f_k\right]\right\|_{\mathcal F}\\
		&=\left\|\chi_{K}\left(\prod_{s=0}^{n_k-1}w\circ\alpha^{s}\right)^{-1}\cdot\left(T^{2n_k}_{\alpha,w}f_k+f_k\right)\right\|_{\mathcal F}\\
		&\geq\left\|\chi_{D_k}\left(\prod_{s=0}^{n_k-1}w\circ\alpha^{s}\right)^{-1}\cdot\left(T^{2n_k}_{\alpha,w}f_k+f_k\right)\right\|_{\mathcal F}\\
		&\geq\left(1-\frac{1}{2^{k}}\right)\,\left\|\chi_{D_k}\left(\prod_{s=0}^{n_k-1}w\circ\alpha^{s}\right)^{-1}\right\|_{\mathcal F}.
	\end{align*}
	This completes the proof.
\end{proof}
%==========================================================
\begin{corollary}\label{cor2}
	Let $\alpha$ be an aperiodic function on a topological space $X$, and $\mathcal F$ be a Banach function space on $X$ satisfying the condition $\Omega_{\alpha}$. Suppose that $X$ has a compact subset $K$ with $\|\chi_K\|_{\mathcal F}>0$, and let $ w,\frac{1}{w}$ be weight functions on $X$. If $\sup_{x\in X}w(x)<1$, 
	then $\mathcal P((C_{\alpha,w}^{(n)})_{n\in \mathbb{N}_0})$ 
	is not dense in $\mathcal{F}$, and so $(C^{(n)}_{\alpha,w})_{n\in \mathbb{N}_0}$ is not chaotic.
\end{corollary}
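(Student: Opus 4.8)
The plan is to mirror the proof of Corollary \ref{cor1}, replacing the role of $S_{\alpha,w}^n$ by $T_{\alpha,w}^n$ and invoking Theorem \ref{main2} in place of Theorem \ref{main}; the hypothesis $\sup_{x\in X}w(x)<1$ will play the part that $\inf_{x\in X}w(x)>1$ played before. So the argument is entirely symmetric, and I expect no genuinely new idea to be needed.

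First I would establish that $\lim_{n\to\infty}T_{\alpha,w}^n f=0$ for \emph{every} $f\in\mathcal F$. Using the formula $T_{\alpha,w}^n f=\bigl(\prod_{s=0}^{n-1}w\circ\alpha^s\bigr)\cdot(f\circ\alpha^n)$ together with the pointwise estimate $\prod_{s=0}^{n-1}w\circ\alpha^s\le(\sup_{x}w(x))^n$, solidity gives $\|T_{\alpha,w}^n f\|_{\mathcal F}\le(\sup_x w(x))^n\,\|f\circ\alpha^n\|_{\mathcal F}$, and $\alpha$-invariance turns the right-hand side into $(\sup_x w(x))^n\|f\|_{\mathcal F}$. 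Since $\sup_x w(x)<1$, this tends to $0$. In particular the limit half of condition (i) of Theorem \ref{main2} holds automatically (for all $f$, hence for every periodic $f$).

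Next I would fix the compact set $K$ with $\|\chi_K\|_{\mathcal F}>0$ and show that condition (ii) of Theorem \ref{main2} must fail for this $K$. Let $(D_k)$ be any sequence of Borel subsets of $K$ with $\|\chi_{K\setminus D_k}\|_{\mathcal F}\to0$ and let $(n_k)$ be any strictly increasing sequence in $\mathbb N$. From $w\le\sup_x w(x)<1$ we obtain the pointwise bound $\bigl(\prod_{s=0}^{n_k-1}w\circ\alpha^s\bigr)^{-1}\ge(\sup_x w(x))^{-n_k}$, so solidity yields
\[
\bigl\|\chi_{D_k}\bigl(\textstyle\prod_{s=0}^{n_k-1}w\circ\alpha^s\bigr)^{-1}\bigr\|_{\mathcal F}\ \ge\ (\sup_x w(x))^{-n_k}\,\|\chi_{D_k}\|_{\mathcal F}.
\]
Since $\chi_K=\chi_{D_k}+\chi_{K\setminus D_k}$ gives $\|\chi_{D_k}\|_{\mathcal F}\ge\|\chi_K\|_{\mathcal F}-\|\chi_{K\setminus D_k}\|_{\mathcal F}\ge\tfrac12\|\chi_K\|_{\mathcal F}>0$ for large $k$, and $(\sup_x w(x))^{-n_k}\to\infty$, the right-hand side diverges. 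Hence no admissible pair $((D_k),(n_k))$ can make the norm in (ii) tend to $0$, so (ii) fails for this $K$.

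Therefore condition (i) of Theorem \ref{main2} fails. Because condition (i) is the conjunction of density of $\mathcal P((C_{\alpha,w}^{(n)})_{n\in\mathbb N_0})$ with the limit condition $\lim_n T_{\alpha,w}^n f=0$ for periodic $f$, and the latter was already verified for all $f$, the failure must lie in the density part; thus $\mathcal P((C_{\alpha,w}^{(n)})_{n\in\mathbb N_0})$ is not dense and $(C_{\alpha,w}^{(n)})_{n\in\mathbb N_0}$ is not chaotic. The only delicate point is precisely this last bookkeeping: Theorem \ref{main2} is a one-directional implication, so I must separately confirm the limit condition in order to pin the failure onto the density statement rather than onto the limit.
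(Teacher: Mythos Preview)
Your proposal is correct and follows essentially the same route as the paper's proof: first verify $\lim_n T_{\alpha,w}^n f=0$ for all $f$ via the pointwise bound and $\alpha$-invariance, then show that condition~(ii) of Theorem~\ref{main2} fails for the given compact $K$ because the relevant norms blow up, and conclude via the contrapositive of Theorem~\ref{main2}. Your write-up is in fact slightly more explicit than the paper's at two points---the lower bound $\|\chi_{D_k}\|_{\mathcal F}\ge\|\chi_K\|_{\mathcal F}-\|\chi_{K\setminus D_k}\|_{\mathcal F}$ and the logical observation that, since the limit half of~(i) holds automatically, the failure of~(i) must fall on the density half---but the argument is the same.
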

\begin{proof}
	For each $n\in\mathbb{N}$ and $f\in \mathcal{F}$,
	\begin{align*}
	\|T^{n}_{\alpha,w}f\|_{\mathcal F}&=\left\| \left(\prod_{s=0}^{n-1}w\circ\alpha^{s}\right)\cdot(f\circ\alpha^{n})\right\|_{\mathcal F}\\
	&=\left\| \left(\prod_{s=0}^{n-1}w\circ\alpha^{s}\circ\alpha^{-n}\right)\cdot f\right\|_{\mathcal F}\\
	&=\left\| \left(\prod_{s=1}^{n}w\circ\alpha^{-s}\right)\cdot f\right\|_{\mathcal F}\\
	&\leq \left(\sup_{x\in X}w(x)\right)^{n}\,\|f\|_{\mathcal F}.
	\end{align*}
	So, by assumption, for each $f\in \mathcal{F}$ we have
	\begin{equation*}
	\lim_{n\rightarrow\infty}T^{n}_{\alpha,w}f=0
	\end{equation*}
	in $\mathcal{F}$.
	On the other hand,
	assume that $K$ is a compact subset of $X$ with $\|\chi_K\|_{\mathcal F}>0$, $(D_{k})_{k=1}^\infty$ is a sequence of Borel subsets of $K$ with
	$\lim_{k\rightarrow \infty}\|\chi_{K\setminus D_{k}}\|_{\mathcal F}=0$ and $(n_k)_{k=1}^\infty\subseteq\mathbb{N}$ is a strictly increasing sequence.
	Then
	\begin{equation*}
	\|\chi_{D_k}\|_{\mathcal F}\,\left(\sup_{x\in X}w(x)\right)^{-n_k}\leq\left\|\chi_{D_{k}}\cdot\left(\prod_{s=0}^{n_k-1}w\circ\alpha^{s}\right)^{-1}\right\|_{\mathcal F},
	\end{equation*}
	and so 
	\begin{equation*}
	\lim_{k\rightarrow \infty}\left\|\chi_{D_{k}}\cdot\left(\prod_{s=0}^{n_k-1}w\circ\alpha^{s}\right)^{-1}\right\|_{\mathcal F}=\infty,
	\end{equation*}
	
	and the proof is complete, thanks to Theorem \ref{main2}.
\end{proof}
%=========================================================
From Corollaries \ref{cor1} and \ref{cor2} a necessary condition for the cosine operator function to be chaotic is given in below result.
%========================================================
\begin{corollary}\label{ttthm}
	Let $\alpha$ be an aperiodic function on a topological space $X$, and $\mathcal F$ be a Banach function space on $X$ satisfying the condition $\Omega_{\alpha}$. Suppose that $X$ has a compact subset $K$ with $\|\chi_K\|_{\mathcal F}>0$, and let $ w,\frac{1}{w}$ be weight functions on $X$. If $(C^{(n)}_{\alpha,w})_{n\in \mathbb{N}_0}$ is chaotic, then $\inf_{x\in X}w(x)\leq 1\leq \sup_{x\in X}w(x)$.
\end{corollary}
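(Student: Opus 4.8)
The plan is to argue by contraposition, assembling the two preceding corollaries. Since both $w$ and $\tfrac{1}{w}$ are weights, i.e.\ bounded Borel functions into $(0,\infty)$, the quantities $\inf_{x\in X}w(x)$ and $\sup_{x\in X}w(x)$ are finite and strictly positive, so the inequality in the conclusion is meaningful. The key observation is that being chaotic forces, in particular, the set $\mathcal P((C^{(n)}_{\alpha,w})_{n\in\mathbb N_0})$ to be dense in $\mathcal F$; so it suffices to exhibit contradictions with this density whenever the asserted two-sided bound on $w$ fails.

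I would split the argument into the two halves of the inequality. First, suppose toward a contradiction that $\inf_{x\in X}w(x)>1$. All the standing hypotheses of Corollary~\ref{cor1} are in force (aperiodicity of $\alpha$, condition $\Omega_\alpha$ on $\mathcal F$, the existence of a compact $K$ with $\|\chi_K\|_{\mathcal F}>0$, and $w,\tfrac1w$ weights), so Corollary~\ref{cor1} directly yields that $\mathcal P((C^{(n)}_{\alpha,w})_{n\in\mathbb N_0})$ is not dense, contradicting chaoticity. Hence $\inf_{x\in X}w(x)\le 1$. Symmetrically, if instead $\sup_{x\in X}w(x)<1$, then Corollary~\ref{cor2} applies under the same hypotheses and again gives non-density of the periodic set, a contradiction; therefore $\sup_{x\in X}w(x)\ge 1$.

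Combining the two conclusions gives $\inf_{x\in X}w(x)\le 1\le \sup_{x\in X}w(x)$, as required. There is no genuine obstacle here: the entire content has already been carried out in Corollaries~\ref{cor1} and~\ref{cor2}, and this statement is merely their joint contrapositive. The only point meriting a word of care is bookkeeping of the failure cases, namely checking that the negation of $\inf_{x\in X}w(x)\le 1\le\sup_{x\in X}w(x)$ splits cleanly into the two mutually exclusive scenarios $\inf w>1$ and $\sup w<1$ handled by the respective corollaries, which is immediate since $\inf_{x\in X}w(x)\le\sup_{x\in X}w(x)$ precludes any other configuration.
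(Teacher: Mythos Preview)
Your proposal is correct and follows exactly the paper's approach: the statement is presented there as an immediate consequence of Corollaries~\ref{cor1} and~\ref{cor2}, and your contrapositive argument simply spells out that deduction. No further elaboration is needed.
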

%==================================================================================== 
\begin{remark}
	 Note that in the proofs of Theorems \ref{main} and \ref{main2} and Corollaries \ref{cor1}, \ref{cor2} and \ref{ttthm} we have not used the assumption that compactly supported  bounded functions in $\mathcal F$ are dense in $\mathcal F$.
\end{remark}
%===================================================================================
Now, we give a sufficient condition for the cosine operator function to be topologically transitive.
%====================================================================================
\begin{theorem}\label{thm}
		Let $\alpha$ be a bijection on a topological space $X$ such that $\alpha$ and $\alpha^{-1}$ are Borel measurable, and $\mathcal F$ be a Banach function space on $X$ satisfying the condition $\Omega_{\alpha}$. Let $ w,\frac{1}{w}$ be weight functions on $X$. Then, {\rm (ii)} implies {\rm (i)}:
	\begin{enumerate}
		\item [{\rm(i)}] The sequence $(C_{\alpha,w}^{(n)})_{n\in \mathbb{N}_0} $ is  topologically transitive.
		\item [{\rm(ii)}]For each  compact subset $K\subseteq X$,  there are sequences $(D_k)$, $(E_k)$ and $(F_k)$ of Borel subsets of
		$K$, and  an increasing sequence $ (n_k) $ of positive integers  such that for each $k$, $D_k=E_k\cup F_k,$ $E_{k} \cap F_{k}= \varnothing,$ $\lim_{k\rightarrow \infty}\|\chi_{K\setminus D_{k}}\|_{\mathcal F}=0,$ 
		$$\lim_{k\rightarrow \infty}\left\|\chi_{D_{k}}\cdot\left(\prod_{s=1}^{n_k}w\circ \alpha^{-s}\right)\right\|_{\mathcal F}=\lim_{k\rightarrow \infty}\left\|\chi_{D_{k}}\cdot\left(\prod_{s=0}^{n_k-1}(w\circ \alpha^{s})\right)^{-1}\right\|_{\mathcal F}=0$$
		and 
		$$\lim_{k\rightarrow \infty}\left\|\chi_{E_{k}}\cdot\left(\prod_{s=1}^{2n_k}w\circ \alpha^{-s}\right)\right\|_{\mathcal F}=\lim_{k\rightarrow \infty}\left\|\chi_{F_{k}}\cdot\left(\prod_{s=0}^{2n_k-1}(w\circ \alpha^{s})\right)^{-1}\right\|_{\mathcal F}=0.$$
	\end{enumerate}
\end{theorem}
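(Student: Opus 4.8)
The plan is to verify the definition of topological transitivity directly. Given non-empty open sets $U,V\subseteq\mathcal F$, I would use part (3) of $\Omega_\alpha$ to choose bounded compactly supported $f\in U$ and $g\in V$, and let $K$ be a compact set with $\mathrm{supp}(f)\cup\mathrm{supp}(g)\subseteq K$. Applying (ii) to this $K$ furnishes $(D_k),(E_k),(F_k)$ and $(n_k)$ with the stated properties. It then suffices to build vectors $u_k\to f$ with $C^{(n_k)}_{\alpha,w}u_k\to g$: for $k$ large we get $u_k\in U$ and $C^{(n_k)}_{\alpha,w}u_k\in V$, so $C^{(n_k)}_{\alpha,w}(U)\cap V\neq\varnothing$. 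Before the construction I would record two identities, both immediate from the $\alpha$-invariance of $\|\cdot\|_{\mathcal F}$ (compose with $\alpha^{\mp n}$ and reindex the products):
\[
\|T^n_{\alpha,w}h\|_{\mathcal F}=\Big\|\Big(\prod_{s=1}^{n}w\circ\alpha^{-s}\Big)h\Big\|_{\mathcal F},\qquad
\|S^n_{\alpha,w}h\|_{\mathcal F}=\Big\|\Big(\prod_{s=0}^{n-1}w\circ\alpha^{s}\Big)^{-1}h\Big\|_{\mathcal F}.
\]
Combined with solidity and the boundedness of $f,g$, these turn each hypothesis in (ii) into a decay statement for the appropriate image under a power of $T_{\alpha,w}$ or $S_{\alpha,w}$.

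The construction I propose is
\[
u_k:=f\chi_{D_k}+2\,T^{n_k}_{\alpha,w}(g\chi_{E_k})+2\,S^{n_k}_{\alpha,w}(g\chi_{F_k}),
\]
where the pairing of $E_k$ with the $T$-direction and $F_k$ with the $S$-direction is dictated by the asymmetric weight products appearing in (ii). To see $u_k\to f$, write $u_k-f=-f\chi_{K\setminus D_k}+2T^{n_k}_{\alpha,w}(g\chi_{E_k})+2S^{n_k}_{\alpha,w}(g\chi_{F_k})$: the first term vanishes via $\|\chi_{K\setminus D_k}\|_{\mathcal F}\to0$ and boundedness of $f$, and the other two vanish by the two limits of the first display in (ii) together with $E_k,F_k\subseteq D_k$ and solidity.

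For the image, I would use $T^{n_k}_{\alpha,w}S^{n_k}_{\alpha,w}=S^{n_k}_{\alpha,w}T^{n_k}_{\alpha,w}=I$ to expand
\[
C^{(n_k)}_{\alpha,w}u_k=C^{(n_k)}_{\alpha,w}(f\chi_{D_k})+g\chi_{D_k}+T^{2n_k}_{\alpha,w}(g\chi_{E_k})+S^{2n_k}_{\alpha,w}(g\chi_{F_k}),
\]
using $g\chi_{E_k}+g\chi_{F_k}=g\chi_{D_k}$. Since $g\chi_{D_k}=g-g\chi_{K\setminus D_k}$, it remains to drive the three other terms to $0$: the term $C^{(n_k)}_{\alpha,w}(f\chi_{D_k})$ is bounded by the first display of (ii) (applied to the $n_k$-th powers, via solidity on $D_k$), while $T^{2n_k}_{\alpha,w}(g\chi_{E_k})$ and $S^{2n_k}_{\alpha,w}(g\chi_{F_k})$ are controlled exactly by the two limits in the second display of (ii). This gives $C^{(n_k)}_{\alpha,w}u_k\to g$ and completes the verification.

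The delicate point, and the reason the hypotheses are shaped as they are, is that one must truncate $f$ to $f\chi_{D_k}$ \emph{before} applying any power of $T_{\alpha,w}$ or $S_{\alpha,w}$. Applying $T^{n_k}_{\alpha,w}$ to $f$ over all of $K$ would produce an uncontrolled factor $\|w\|_{\sup}^{n_k}$ on $K\setminus D_k$, and (ii) only tames the weight products over $D_k,E_k,F_k$. Splitting off $f\chi_{K\setminus D_k}$ is free, since it is absorbed by $\|\chi_{K\setminus D_k}\|_{\mathcal F}\to0$ with no operator applied, and this is precisely what makes the two decay displays of (ii) sufficient to obtain both convergences.
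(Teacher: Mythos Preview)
Your proposal is correct and follows essentially the same approach as the paper: the paper constructs the very same vector $v_k:=f\chi_{D_k}+2T^{n_k}_{\alpha,w}(g\chi_{E_k})+2S^{n_k}_{\alpha,w}(g\chi_{F_k})$ and verifies $v_k\to f$ and $C^{(n_k)}_{\alpha,w}v_k\to g$ via the same norm identities obtained from $\alpha$-invariance. Your write-up is in fact more explicit than the paper's in expanding $C^{(n_k)}_{\alpha,w}u_k$ and explaining why each term vanishes, but the underlying strategy is identical.
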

%..............................................................................
\begin{proof}
	
	Suppose that the condition (ii) holds, and $U$ and $V$ are non-empty open subsets of $\mathcal{F}$. Since $\mathcal{F}_{bc}$ is dense in $\mathcal F$, we can pick some compactly supported bounded functions $f\in U$ and $g\in V$. Let $K:=\text{supp}(f)\cup\text{supp}(g)$. Since $K$ is compact, there are sequences $(D_k)$, $(E_k)$, $(F_k)$ and $(n_k)$ satisfying the condition (ii) for the compact set $K$. Hence
	\begin{align*}
		\left\|T^{n_k}_{\alpha,w}(f\chi_{D_k})\right\|_{\mathcal F} &=\left\|\left(\prod_{s=0}^{n_k-1}w\circ \alpha^{s}\right)\,
		\cdot \left((f\chi_{D_k})\circ\alpha^{n_k}\right)\right\|_{\mathcal F}\\
		&\leq\sup_{x\in X}|f(x)|\, \left\|\left(\prod_{s=1}^{n_k}w\circ \alpha^{-s}\right)\,
		\cdot \chi_{D_k}\right\|_{\mathcal F} \rightarrow 0,
	\end{align*}
	as $k\rightarrow\infty$. This implies that $\lim_{k\longrightarrow \infty} T^{n_k}_{\alpha,w}(f\chi_{D_k})=0$ in $\mathcal F$. Similarly, we have
	\begin{align*}
		\lim_{k\longrightarrow \infty} S^{n_k}_{\alpha,w}(f\chi_{D_k})&=\lim_{k\longrightarrow \infty} T^{n_k}_{\alpha,w}(g\chi_{D_k})
		=\lim_{k\longrightarrow \infty} S^{n_k}_{\alpha,w}(g\chi_{D_k})\\
		&=\lim_{k\longrightarrow \infty} T^{2n_k}_{\alpha,w}(g\chi_{E_k})=\lim_{k\longrightarrow \infty} S^{2n_k}_{\alpha,w}(g\chi_{F_k})=0
	\end{align*}
	in $\mathcal F$. Moreover, using these equalities and the assumption that $\mathcal F$ is solid, we easily obtain that $\lim_{k\rightarrow\infty} T^{n_k}_{\alpha,w}(g \chi_{E_k})=\lim_{k\rightarrow \infty} S^{n_k}_{\alpha,w}(g \chi_{F_k})=0$  in $\mathcal F$. Therefore, setting
	$$v_k:=f\chi_{D_k}+2T^{n_k}_{\alpha,w}(g\chi_{E_k})+2S^{n_k}_{\alpha,w}(g\chi_{F_k}),$$
	for all $k$, we have
	$$\lim_{k\longrightarrow \infty}v_k=f\quad\text{ and }\lim_{k\longrightarrow \infty}C^{(n_k)}_{\alpha,w}v_k=g.$$
	
	This implies that there is an index $k$ with $C^{(n_k)}_{\alpha,w}v_k\in C^{(n_k)}_{\alpha,w}(U)\cap V$. So the condition (i) follows.
\end{proof}
%==================================================================
%==================================================================
Here, as an application of Theorem \ref{thm}, a sufficient condition for $(C_{\alpha,w}^{(n)})_{n\in \mathbb{N}_0}$ to be chaotic is stated.

\begin{corollary}\label{sufficient}
	Let $\alpha$ be a bijection on a topological space $X$ such that $\alpha$ and $\alpha^{-1}$ are Borel measurable, and $\mathcal F$ be a Banach function space on $X$ satisfying the condition $\Omega_{\alpha}$. Let $ w,\frac{1}{w}$ be weight functions on $X$. Then we have ${\rm(ii)} \Rightarrow {\rm(i)}.$
	\begin{enumerate}
		\item [{\rm(i)}] The sequence $(C_{\alpha,w}^{(n)})_{n\in \mathbb{N}_0}$ is chaotic on $\mathcal{F}$.
		\item [{\rm(ii)}] For each compact set $K\subseteq X$, there exist a sequence of Borel sets $(D_{k})_{k=1}^\infty$ in $K$
		and a strictly increasing sequence $(n_k)\subset\Bbb{N}$ such that
		$\lim_{k\rightarrow \infty}\|\chi_{K\setminus D_{k}}\|_{\mathcal F}=0$ and
		$$\lim_{k\rightarrow \infty}\sum_{l=1}^{\infty}\left\|\chi_{D_{k}}\cdot \Big(\prod_{s=1}^{ln_k}w\circ \alpha^{-s}\Big)\right\|_{\mathcal F}
		=\lim_{k\rightarrow \infty}\sum_{l=1}^{\infty}\left\|\chi_{D_{k}}\cdot\Big(\prod_{s=0}^{ln_k-1}w\circ \alpha^{s}\Big)^{-1}\right\|_{\mathcal F}=0,$$
	\end{enumerate}
 where the corresponding series are convergent for each $k$.
\end{corollary}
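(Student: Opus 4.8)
The plan is to verify the two defining ingredients of chaos separately: topological transitivity, which will drop out of Theorem~\ref{thm} almost for free, and density of the periodic elements, which requires an explicit construction and is where the real work lies.

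For transitivity, I would first note that hypothesis (ii) here is strictly stronger than hypothesis (ii) of Theorem~\ref{thm}. Indeed, a convergent series of nonnegative reals whose sum tends to $0$ has each of its individual terms tending to $0$; applying this to the summands $l=1$ and $l=2$ of the two series in (ii) gives
$$\lim_{k\to\infty}\Bigl\|\chi_{D_k}\prod_{s=1}^{n_k}w\circ\alpha^{-s}\Bigr\|_{\mathcal F}=\lim_{k\to\infty}\Bigl\|\chi_{D_k}\Bigl(\prod_{s=0}^{n_k-1}w\circ\alpha^{s}\Bigr)^{-1}\Bigr\|_{\mathcal F}=0$$
together with the corresponding limits for the exponent $2n_k$. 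Setting $E_k:=D_k$ and $F_k:=\varnothing$, so that $D_k=E_k\cup F_k$ is a disjoint decomposition, makes every requirement of Theorem~\ref{thm}(ii) hold for this compact set, and Theorem~\ref{thm} then yields that $(C^{(n)}_{\alpha,w})_{n\in\mathbb N_0}$ is topologically transitive.

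For the periodic elements, since $\mathcal F_{bc}$ is dense it suffices to approximate an arbitrary bounded compactly supported $g\in\mathcal F$ by periodic vectors. I would put $K:=\text{supp}(g)$, take the sequences $(D_k)$ and $(n_k)$ that (ii) provides for this $K$, and define
$$f_k:=\sum_{l\in\mathbb Z}T^{ln_k}_{\alpha,w}(g\chi_{D_k}).$$
Each $f_k$ is a periodic element of period $n_k$: the operator $T^{mn_k}_{\alpha,w}$ merely reindexes the bi-infinite sum, so $T^{mn_k}_{\alpha,w}f_k=f_k$ and likewise $S^{mn_k}_{\alpha,w}f_k=T^{-mn_k}_{\alpha,w}f_k=f_k$, whence $C^{(mn_k)}_{\alpha,w}f_k=\tfrac12(T^{mn_k}_{\alpha,w}+S^{mn_k}_{\alpha,w})f_k=f_k$ for every $m\in\mathbb N$. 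To make this rigorous I would, exactly as in the proof of Theorem~\ref{thm} (using solidity and $\alpha$-invariance of the norm), bound $\|T^{ln_k}_{\alpha,w}(g\chi_{D_k})\|_{\mathcal F}\le\|g\|_{\sup}\|\chi_{D_k}\prod_{s=1}^{ln_k}w\circ\alpha^{-s}\|_{\mathcal F}$ and $\|S^{ln_k}_{\alpha,w}(g\chi_{D_k})\|_{\mathcal F}\le\|g\|_{\sup}\|\chi_{D_k}(\prod_{s=0}^{ln_k-1}w\circ\alpha^{s})^{-1}\|_{\mathcal F}$. The two series hypotheses in (ii) then show that the sum defining $f_k$ is absolutely convergent, so $f_k\in\mathcal F$ is well defined; and since $\|g\chi_{D_k}-g\|_{\mathcal F}=\|g\chi_{K\setminus D_k}\|_{\mathcal F}\le\|g\|_{\sup}\|\chi_{K\setminus D_k}\|_{\mathcal F}$, summing the same bounds over $l\ge1$ gives $\|f_k-g\|_{\mathcal F}\to0$ as $k\to\infty$. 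Hence the periodic elements are dense, and combined with transitivity this proves that $(C^{(n)}_{\alpha,w})_{n\in\mathbb N_0}$ is chaotic.

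The main obstacle is the periodic-element half. One must produce a single vector fixed by \emph{all} the operators $C^{(mn_k)}_{\alpha,w}$ simultaneously, and the natural candidate is the bi-infinite orbit sum above; the genuine difficulty is guaranteeing that this sum converges in $\mathcal F$. This is precisely why the hypothesis upgrades the single-term smallness conditions of Theorem~\ref{thm} to the summability of the whole series: absolute convergence of $\sum_{l\in\mathbb Z}T^{ln_k}_{\alpha,w}(g\chi_{D_k})$ is exactly what the two series conditions deliver. The transitivity half, by contrast, is essentially a free consequence of Theorem~\ref{thm} once the $l=1,2$ terms are extracted.
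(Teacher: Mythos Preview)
Your proposal is correct and follows essentially the same route as the paper's proof: the paper also invokes Theorem~\ref{thm} directly for transitivity, defines the periodic approximant as $v_k:=f\chi_{D_k}+\sum_{l\ge1}T^{ln_k}_{\alpha,w}(f\chi_{D_k})+\sum_{l\ge1}S^{ln_k}_{\alpha,w}(f\chi_{D_k})$ (which is exactly your bi-infinite sum $\sum_{l\in\mathbb Z}T^{ln_k}_{\alpha,w}(g\chi_{D_k})$), and bounds $\|v_k-f\|_{\mathcal F}$ by the same three terms you write down. Your version is in fact more explicit where the paper is terse---you spell out the choice $E_k=D_k$, $F_k=\varnothing$ and the extraction of the $l=1,2$ summands for the transitivity step, and you justify the periodicity $C^{(mn_k)}_{\alpha,w}f_k=f_k$ via the reindexing argument, whereas the paper simply asserts it ``by a simple calculation.''
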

\begin{proof}
	By Theorem \ref{thm}, the condition (ii)  automatically implies that $(C_{\alpha,w}^{(n)})_{n\in \mathbb{N}_0}$ is topologically transitive.
	Hence, it would be sufficient to prove that the set $\mathcal{P}((C_{\alpha,w}^{(n)})_{n\in \mathbb{N}_0})$ is dense in $\mathcal{F}$.
	Let $U$ be a non-empty open subset of $\mathcal{F}$. Then there exists a compactly supported bounded function $f$ in $U$. Put $K:={\rm supp}(f)$. Then there exist a sequence $(D_{k})_{k=1}^\infty$ of Borel subsets of $K$, and a strictly increasing sequence $(n_k)\subseteq\Bbb{N}$ satisfying the condition (ii). Set
	$$v_k:=f\chi_{D_k}+\sum_{l=1}^{\infty}T^{ln_k}_{\alpha,w}(f\chi_{D_k})+\sum_{l=1}^{\infty}S^{ln_k}_{\alpha,w}(f\chi_{D_k}).$$
	
	Note that the series $\sum_{l=1}^{\infty}T^{ln_k}_{\alpha,w}(f\chi_{D_k})$ and $\sum_{l=1}^{\infty}S^{ln_k}_{\alpha,w}(f\chi_{D_k})$ are convergent in $\mathcal F$ because they are absolutely convergent by (ii). Then $\lim_{k\rightarrow\infty}v_k=f$ because
	\begin{eqnarray*}
		\|v_k-f\|_{\mathcal F}&\leq&\|f\|_\infty\left\|\chi_{K\setminus D_k}\right\|_{\mathcal F}+\sum_{l=1}^{\infty}\left\|T^{ln_k}_{\alpha,w}(f\chi_{D_k})\right\|_{\mathcal F}
		+\sum_{l=1}^{\infty}\left\|S^{ln_k}_{\alpha,w}(f\chi_{D_k})\right\|_{\mathcal F}\\
		&\leq&\|f\|_\infty \left\|\chi_{K\setminus D_k}\right\|_{\mathcal F}\\
		&&\hspace{-1cm}+\|f\|_\infty\sum_{l=1}^{\infty}\left\|\chi_{D_{k}}\cdot \Big(\prod_{s=1}^{ln_k}w\circ \alpha^{-s}\Big)\right\|_{\mathcal F}
		+\|f\|_\infty\sum_{l=1}^{\infty}\left\|\chi_{D_{k}}\cdot\Big(\prod_{s=0}^{ln_k-1}w\circ \alpha^{s}\Big)^{-1}\right\|_{\mathcal F}.
	\end{eqnarray*}
	Moreover, one can see that $C_{\alpha,w}^{(ln_k)}v_k=v_k$ for all $l\in\Bbb{N}$ by a simple calculation. Therefore,
	$$U\cap\mathcal{P}((C_{\alpha,w}^{(n)})_{n\in \mathbb{N}_0})\neq\varnothing$$
	which implies that $\mathcal{P}((C_{\alpha,w}^{(n)})_{n\in \mathbb{N}_0})$ is dense in $\mathcal{F}$.
\end{proof}
%===========================================================
%===========================================================
\section{The Adjoint of Cosine Functions}
%=========================================================
%============================================================
Let $\mathcal{X}$ be a Banach space. The adjoint of an operator $T\in B(\mathcal{X})$ is defined by 
$$T^*:\mathcal{X}'\rightarrow\mathcal{X}',\quad T^*(\varphi)(x):=\varphi(T(x)),$$
where $\mathcal{X}'$ is the dual of $\mathcal{X}$, $\varphi\in\mathcal{X}'$ and $x\in\mathcal{X}$. 

For each $\varphi\in\mathcal{F}'$ and bounded function $g$ in $\mathcal{M}_0(X)$, we define $\varphi_g\in\mathcal{F}'$ by 
$$\varphi_g(f):=\varphi(gf),\quad(f\in\mathcal{F}).$$
%=========================================================
For each $n\in\mathbb{N}$ and $\varphi\in \mathcal{F}'$ we have 
\begin{equation*}
T_{\alpha,w}^{n^*}(\varphi)=\varphi_{\prod_{s=0}^{n-1}(w\circ\alpha^s)}\circ U_{\alpha}^n\quad\text{and}\quad S_{\alpha,w}^{n^*}(\varphi)=\varphi_{(\prod_{s=1}^{n}(w\circ\alpha^{-s}))^{-1}}\circ U_{\alpha}^{-n},
\end{equation*}
where $U_{\alpha}:\mathcal{F}\rightarrow\mathcal{F}$ is defined by 
$U_{\alpha}(f):=f\circ\alpha$ for all $f\in\mathcal{F}$.
%=========================================================

In this section, we give a sufficient condition for the sequence  $(C^{(n)^*}_{\alpha,w})_{n\in \mathbb{N}_0}$ to be chaotic on $\mathcal{F}'$.
%==========================================================
\begin{theorem}\label{thmf}
	Suppose that there is a compact subset $K$ of $X$ such that $\alpha^n(K)\cap K=\varnothing$ for all $n\geq N$.
	Suppose that $\mathcal{P}((C^{(n)^*}_{\alpha,w})_{n\in \mathbb{N}_0})$ is dense in $\mathcal{F}'$, and $\lim_{n\rightarrow\infty} T_{\alpha,w}^{n^*}\varphi=0$ in $\mathcal{F}'$ for all $\varphi\in\mathcal{F}'$. Then, $\sup_{x\in X}w(x)>1$.
\end{theorem}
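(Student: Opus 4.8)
The plan is to argue by contradiction. I would assume $\sup_{x\in X}w(x)\le 1$, so that $0<w(x)\le 1$ for every $x\in X$. Since $\mathcal F'$ is in general an abstract dual with no solidity or pointwise structure (think of the dual of $\widetilde{\mathcal M}^p_q(\mathbb R^n)$), I would route every estimate through the one concrete function at hand, namely $\chi_K$, which lies in $\mathcal F$ by condition $\Omega_\alpha$; here I take $K$ with $\|\chi_K\|_{\mathcal F}>0$. By Hahn--Banach there is $\psi\in\mathcal F'$ with $\psi(\chi_K)=\|\chi_K\|_{\mathcal F}>0$, and by density of $\mathcal P((C^{(n)^*}_{\alpha,w})_{n\in\mathbb N_0})$ I would choose a periodic functional $\varphi$ so close to $\psi$ that $\varphi(\chi_K)>0$.

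Next I would exploit periodicity. If $\varphi$ has period $N'$, then with $m_k:=kN'$ we have $(C_{\alpha,w}^{(m_k)})^{*}\varphi=\varphi$ for all $k$, which unwinds to the dual cosine identity
\begin{equation*}
2\varphi=(T_{\alpha,w}^{m_k})^{*}\varphi+(S_{\alpha,w}^{m_k})^{*}\varphi,\qquad k\in\mathbb N .
\end{equation*}
Using the pairings $((T_{\alpha,w}^{n})^{*}\varphi)(g)=\varphi(T^n_{\alpha,w}g)$ and $((S_{\alpha,w}^{n})^{*}\varphi)(g)=\varphi(S^n_{\alpha,w}g)$, together with the elementary identities $S^{m}_{\alpha,w}T^{m}_{\alpha,w}=T^{m}_{\alpha,w}S^{m}_{\alpha,w}=I$, I would test this identity against the two vectors $\chi_K$ and $T^{m_k}_{\alpha,w}\chi_K$. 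Evaluating at $T^{m_k}_{\alpha,w}\chi_K$, the $S$--term collapses to $\varphi(\chi_K)$ because $S^{m_k}_{\alpha,w}T^{m_k}_{\alpha,w}=I$, while the two $T$--contributions are $\varphi(T^{2m_k}_{\alpha,w}\chi_K)$ and $2\varphi(T^{m_k}_{\alpha,w}\chi_K)$. The separation hypothesis $\alpha^{n}(K)\cap K=\varnothing$ for $n\ge N$ keeps the translated masses supported on $\alpha^{\mp m_k}(K)$, disjoint from $K$, and the bound $\sup_x w(x)\le 1$ gives $\prod_{s=1}^{m_k}w\circ\alpha^{-s}\le 1$, so that $\|T^{m_k}_{\alpha,w}\chi_K\|_{\mathcal F}$ and $\|T^{2m_k}_{\alpha,w}\chi_K\|_{\mathcal F}$ stay bounded by $\|\chi_K\|_{\mathcal F}$. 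Feeding in the decay hypothesis $(T_{\alpha,w}^{n})^{*}\varphi\to 0$, both $T$--terms tend to $0$ as $k\to\infty$, and the identity forces $\varphi(\chi_K)=0$, contradicting $\varphi(\chi_K)>0$; evaluating instead at $\chi_K$ records the companion relation $\varphi(S^{m_k}_{\alpha,w}\chi_K)\to 2\varphi(\chi_K)$, which I would use to cross-check consistency.

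The step I expect to be the main obstacle is precisely the transfer of the pointwise hypothesis $\sup_x w(x)\le 1$ into control of the functionals $(T_{\alpha,w}^{m_k})^{*}\varphi$ and $(S_{\alpha,w}^{m_k})^{*}\varphi$: because $\mathcal F'$ carries no pointwise structure, $\varphi$ cannot be manipulated as a weighted function, and every use of the weight must be made on the $\mathcal F$--side through test vectors and the bound $\|T^{m_k}_{\alpha,w}\chi_K\|_{\mathcal F}\le\|\chi_K\|_{\mathcal F}$. I would also watch the period bookkeeping carefully, since the relation $(C_{\alpha,w}^{(m_k)})^{*}\varphi=\varphi$ must be applied along the multiples $m_k=kN'\to\infty$ and $2m_k$ simultaneously so that the decay hypothesis governs both $T$--terms at once, and I would verify that the approximating periodic $\varphi$ can genuinely be taken with $\varphi(\chi_K)$ bounded away from $0$. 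A subtle point I anticipate is that a careless execution seems to reach $\varphi(\chi_K)=0$ from the decay hypothesis alone, which would make the two standing hypotheses incompatible; locating exactly where $\sup_x w(x)\le 1$ is indispensable, and in particular tracing the strictness of the conclusion $\sup_x w(x)>1$ back to the borderline case $\sup_x w(x)=1$ (still covered by $\|T^{m_k}_{\alpha,w}\chi_K\|_{\mathcal F}\le\|\chi_K\|_{\mathcal F}$, hence also to be excluded), is where I would concentrate the most care.
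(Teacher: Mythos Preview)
Your approach is correct and takes a genuinely different route from the paper. The paper does not argue by contradiction: it fixes the localized Hahn--Banach functional $\tilde\phi_K(f):=\phi_K(\chi_K f)$, chooses periodic $\varphi_k$ near $\tilde\phi_K$ with $C_{\alpha,w}^{(n_k)*}\varphi_k=\varphi_k$, and then manipulates the identity $2\varphi_k=T_{\alpha,w}^{n_k*}\varphi_k+S_{\alpha,w}^{n_k*}\varphi_k$ \emph{entirely on the dual side}, using the explicit multiplication-by-weight formula $T_{\alpha,w}^{n*}\varphi=\varphi_{\prod_{s=0}^{n-1}(w\circ\alpha^s)}\circ U_\alpha^n$ to extract the quantitative bound $\tfrac{2}{4^k}\ge\bigl(1-\tfrac{1}{4^k}\bigr)\bigl(\sup_{x}w(x)\bigr)^{-n_k}$, from which $\sup_x w(x)>1$ follows directly. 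You instead stay on the $\mathcal F$-side, testing the cosine identity against $T_{\alpha,w}^{m_k}\chi_K$ and using $S_{\alpha,w}^{m_k}T_{\alpha,w}^{m_k}=I$ to collapse to $\varphi(\chi_K)=2\varphi(T_{\alpha,w}^{m_k}\chi_K)-\varphi(T_{\alpha,w}^{2m_k}\chi_K)\to 0$.

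Your closing worry is exactly right, and sharper than you state it. In the step $\varphi(T_{\alpha,w}^{m_k}\chi_K)=(T_{\alpha,w}^{m_k*}\varphi)(\chi_K)\to 0$ you are applying the decay hypothesis to the \emph{fixed} test vector $\chi_K$, so no bound on $\|T_{\alpha,w}^{m_k}\chi_K\|_{\mathcal F}$ is needed; neither the contradiction assumption $\sup_x w(x)\le 1$ nor the disjointness $\alpha^{n}(K)\cap K=\varnothing$ enters anywhere. What your argument actually proves is that the two standing hypotheses (density of periodic points for $(C_{\alpha,w}^{(n)*})$ together with $T_{\alpha,w}^{n*}\varphi\to 0$ for every $\varphi\in\mathcal F'$) are incompatible whenever some $\chi_K\neq 0$ exists in $\mathcal F$; the stated conclusion then holds vacuously. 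This buys you a much shorter and more conceptual proof; the paper's explicit weight manipulation instead yields a direct inequality linking $\sup_x w(x)$ to the approximation rate, but both routes establish the implication as stated.
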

%=========================================================
\begin{proof}
	Let $K$ be a compact subset of $X$ such that $\alpha^n(K)\cap K=\varnothing$ for all $n\geq N$. Since $\chi_K\in\mathcal{F}$, by Hahn-Banach Theorem, there exists some $\phi_K\in\mathcal{F}'$ such that $\phi_K(\chi_K)=\|\chi_K\|_{\mathcal{F}}$ and $\|\phi_K\|=1$. Define $\tilde{\phi}_K(f):=\phi_K(\chi_K f)$ for all $f\in \mathcal{F}$. Clearly, by properties of the function space $\mathcal{F}$, we have $\tilde{\phi}_K\in\mathcal{F}'$, and 
	$\|\tilde{\phi}_K\|=1$.

By the assumption, there are sequences $(\varphi_k)\subseteq \mathcal{F}'$ and $(n_k)\subseteq \mathbb{N}$ with $N<n_1<n_2<\ldots$ such that $C_{\alpha,w}^{(n_k)^*}(\varphi_k)=\varphi_k$,
\begin{equation*}
\|\varphi_k+T_{\alpha,w}^{*^{2n_k}}(\varphi_k)+\tilde{\phi}_K\|<\frac{1}{4^k},\quad\text{and}\quad\|\varphi_k-\tilde{\phi}_K\|<\frac{1}{4^k}
\end{equation*}
for all $k\in\mathbb{N}_0$. This implies that for each $k$ there exists some $0\neq f_k\in\mathcal{F}$ such that
$$\Big|\varphi_k(\chi_K f_k)+T_{\alpha,w}^{*^{2n_k}}(\varphi_k)(\chi_K f_k)\Big|>\Big(1-\frac{1}{4^k}\Big)\,\|f_k\|_{\mathcal{F}}.$$

For each $n\in\mathbb{Z}$ and bounded function $g\in \mathcal{M}_0(X)$,
$$\Big(\varphi\circ U^n_{\alpha}\Big)_{g}=\varphi_{g\circ\alpha^n}\circ U^n_{\alpha}.$$

Next, by some calculation we have 
\begin{align*}
\frac{2}{4^k}&\geq 2\|\varphi_k-\tilde{\phi}_K\|\\
&=\|T_{\alpha,w}^{*^{n_k}}(\varphi_k)+S_{\alpha,w}^{*^{n_k}}(\varphi_k)-2\tilde{\phi}_K\|\\
&=\|(\varphi_k)_{\prod_{s=0}^{n_k-1}(w\circ\alpha^s)}\circ U_{\alpha}^{n_k}+(\varphi_k)_{(\prod_{s=1}^{n_k}(w\circ\alpha^{-s}))^{-1}}\circ U_{\alpha}^{-n_k}-2\tilde{\phi}_K\|\\
&=\|(\varphi_k)_{\prod_{s=0}^{n_k-1}(w\circ\alpha^s)}\circ U_{\alpha}^{2n_k}+(\varphi_k)_{(\prod_{s=1}^{n_k}(w\circ\alpha^{-s}))^{-1}}-2\tilde{\phi}_K\circ U_{\alpha}^{n_k}\|\\
&=\|\Big((\varphi_k)_{\prod_{s=0}^{2n_k-1}(w\circ\alpha^s)}\circ U_{\alpha}^{2n_k}\Big)_{\prod_{s=1}^{n_k}(w\circ\alpha^{-s})^{-1}}+(\varphi_k)_{(\prod_{s=1}^{n_k}(w\circ\alpha^{-s}))^{-1}}-2\tilde{\phi}_K\circ U_{\alpha}^{n_k}\|\\
&=\|\Big(T_{\alpha,w}^{*^{2n_k}}(\varphi_k)+\varphi_k\Big)_{(\prod_{s=1}^{n_k}(w\circ\alpha^{-s}))^{-1}}-2\tilde{\phi}_K\circ U_{\alpha}^{n_k}\|\\
&\geq\|\Big(\Big(T_{\alpha,w}^{*^{2n_k}}(\varphi_k)+\varphi_k\Big)_{(\prod_{s=1}^{n_k}(w\circ\alpha^{-s}))^{-1}}-2\tilde{\phi}_K\circ U_{\alpha}^{n_k}\Big)_{\chi_K}\|\\
&=\|\Big(\Big(T_{\alpha,w}^{*^{2n_k}}(\varphi_k)+\varphi_k\Big)_{\chi_K\,(\prod_{s=1}^{n_k}(w\circ\alpha^{-s}))^{-1}}\|\\
&=\Big\|\Big(T_{\alpha,w}^{*^{2n_k}}(\varphi_k)+\varphi_k\Big)_{\chi_K W_k^{-1}}\Big\|,
\end{align*}
where $W_k:=\prod_{s=0}^{n_k-1}(w\circ \alpha^{s})$. Now, setting 
$\widetilde{f_k}:=W_k f_k$, we have 
\begin{align*}
\frac{2}{4^k}\,\|\widetilde{f_k}\|_{\mathcal{F}}&\geq \Big\|\Big(T_{\alpha,w}^{*^{2n_k}}(\varphi_k)+\varphi_k\Big)_{\chi_K W_k^{-1}}\Big\|\,\|\widetilde{f_k}\|_{\mathcal{F}}\\
&\geq \Big|\Big(T_{\alpha,w}^{*^{2n_k}}(\varphi_k)+\varphi_k\Big)_{\chi_K W_k^{-1}}(\widetilde{f_k})\Big|\\
&=\Big|(T_{\alpha,w}^{*^{2n_k}}(\varphi_k)+\varphi_k)(\chi_K f_k)\Big|\\
&>\Big(1-\frac{1}{4^k}\Big)\,\|f_k\|_{\mathcal{F}}\\
&=\Big(1-\frac{1}{4^k}\Big)\,\|W_k^{-1}\widetilde{f_k}\|_{\mathcal{F}}\\
&\geq\Big(1-\frac{1}{4^k}\Big)\,\,\|\widetilde{f_k}\|_{\mathcal{F}}\,\inf_{x\in X}W_k^{-1}(x)\\
&\geq\Big(1-\frac{1}{4^k}\Big)\,\,\|\widetilde{f_k}\|_{\mathcal{F}}\,(\sup_{x\in X}w(x))^{-n_k}.
\end{align*}
Dividing by $\|\widetilde{f_k}\|_{\mathcal{F}}$ on the both sides of this inequality, we get
$$\frac{2}{4^k}\geq \Big(1-\frac{1}{4^k}\Big)\,\,(\sup_{x\in X}w(x))^{-n_k},$$
which gives $w(x)>1$ for all $x\in X$.
\end{proof}
%===========================================================
Similarly, one can prove the following result.
%============================================================
\begin{theorem}
	Suppose that there is a compact subset $K$ of $X$ such that $\alpha^n(K)\cap K=\varnothing$ for all $n\geq N$.
	Suppose that $\mathcal{P}((C^{(n)^*}_{\alpha,w})_{n\in \mathbb{N}_0})$ is dense in $\mathcal{F}'$, and $\lim_{n\rightarrow\infty} S_{\alpha,w}^{n^*}\varphi=0$ in $\mathcal{F}'$ for all $\varphi\in\mathcal{F}'$. Then, $\inf_{x\in X}w(x)<1$.
\end{theorem}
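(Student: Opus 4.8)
The plan is to run the proof of Theorem~\ref{thmf} \emph{verbatim in its structure}, but with the roles of $T_{\alpha,w}$ and $S_{\alpha,w}$ interchanged; since the decay hypothesis now concerns $S_{\alpha,w}^{n^*}$, the surviving term in the final estimate will be $S_{\alpha,w}^{*^{2n_k}}\varphi_k+\varphi_k$ and the extracted weight factor will appear with the opposite exponent, producing a lower bound that forces $\inf_{x\in X}w(x)<1$ instead of $\sup_{x\in X}w(x)>1$.

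First I fix the compact set $K$ furnished by the hypothesis, so $\alpha^{n}(K)\cap K=\varnothing$ for all $n\geq N$, and, exactly as in Theorem~\ref{thmf}, use $\chi_K\in\mathcal F$ and Hahn--Banach to produce $\phi_K\in\mathcal F'$ with $\phi_K(\chi_K)=\|\chi_K\|_{\mathcal F}$ and $\|\phi_K\|=1$, then set $\tilde\phi_K(f):=\phi_K(\chi_K f)$, which satisfies $\|\tilde\phi_K\|=1$ and $(\tilde\phi_K)_{\chi_K}=\tilde\phi_K$. The density of $\mathcal P((C_{\alpha,w}^{(n)^*})_{n\in\mathbb N_0})$ then gives, for each $k$, a periodic $\varphi_k$ of some period $N_k$ with $\|\varphi_k-\tilde\phi_K\|<4^{-k}$; because this $\varphi_k$ is fixed and $S_{\alpha,w}^{m^*}\varphi_k\to 0$, I may choose $n_k$ to be a multiple of $N_k$ with $n_k>\max\{N,n_{k-1}\}$ and $\|S_{\alpha,w}^{*^{2n_k}}\varphi_k\|<4^{-k}$. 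For such $n_k$ one has $C_{\alpha,w}^{(n_k)^*}\varphi_k=\varphi_k$, hence $T_{\alpha,w}^{*^{n_k}}\varphi_k+S_{\alpha,w}^{*^{n_k}}\varphi_k=2\varphi_k$, and likewise $C_{\alpha,w}^{(2n_k)^*}\varphi_k=\varphi_k$; moreover $\|\varphi_k+S_{\alpha,w}^{*^{2n_k}}\varphi_k-\tilde\phi_K\|<2\cdot4^{-k}$. Writing $\psi_k:=S_{\alpha,w}^{*^{2n_k}}\varphi_k+\varphi_k$, this last bound and $(\tilde\phi_K)_{\chi_K}=\tilde\phi_K$ give $\|(\psi_k)_{\chi_K}\|\geq 1-2\cdot4^{-k}$, so I can select $0\neq f_k\in\mathcal F$ with $|\psi_k(\chi_K f_k)|>(1-2\cdot4^{-k})\|f_k\|_{\mathcal F}$.

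The computational core is then the same chain as in Theorem~\ref{thmf}, except that I compose on the right with the isometry $U_\alpha^{-n_k}$ instead of $U_\alpha^{n_k}$. Beginning from $\tfrac{2}{4^k}>2\|\varphi_k-\tilde\phi_K\|=\|T_{\alpha,w}^{*^{n_k}}\varphi_k+S_{\alpha,w}^{*^{n_k}}\varphi_k-2\tilde\phi_K\|$ and applying the adjoint formulas together with the identity $(\varphi\circ U_\alpha^{n})_g=\varphi_{g\circ\alpha^{n}}\circ U_\alpha^{n}$, the $T$-term collapses to $(\varphi_k)_{W_k}$ and the $S$-term, after the index shift, becomes $(S_{\alpha,w}^{*^{2n_k}}\varphi_k)_{W_k}$, where $W_k:=\prod_{s=0}^{n_k-1}(w\circ\alpha^{s})$; restricting by $(\cdot)_{\chi_K}$ annihilates the term $2\tilde\phi_K\circ U_\alpha^{-n_k}$ because $\chi_K\,\chi_{\alpha^{n_k}(K)}=\chi_{K\cap\alpha^{n_k}(K)}=0$. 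This yields $\tfrac{2}{4^k}\geq\|(\psi_k)_{\chi_K W_k}\|$. Finally I set $\widetilde{f_k}:=W_k^{-1}f_k$ so that $\chi_K W_k\widetilde{f_k}=\chi_K f_k$, and combine the two inequalities: $\tfrac{2}{4^k}\|\widetilde{f_k}\|_{\mathcal F}\geq\|(\psi_k)_{\chi_K W_k}\|\,\|\widetilde{f_k}\|_{\mathcal F}\geq|\psi_k(\chi_K f_k)|>(1-2\cdot4^{-k})\|W_k\widetilde{f_k}\|_{\mathcal F}\geq(1-2\cdot4^{-k})(\inf_{x\in X}w(x))^{n_k}\|\widetilde{f_k}\|_{\mathcal F}$, where the last step uses solidity and $\inf W_k\geq(\inf_{x\in X}w(x))^{n_k}$. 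Cancelling $\|\widetilde{f_k}\|_{\mathcal F}$ leaves $\tfrac{2}{4^k}\geq(1-2\cdot4^{-k})(\inf_{x\in X}w(x))^{n_k}$, and letting $k\to\infty$ (whence $n_k\to\infty$) forces $(\inf_{x\in X}w(x))^{n_k}\to0$, i.e.\ $\inf_{x\in X}w(x)<1$.

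Most of this is bookkeeping: the index shift that recombines the two adjoint terms into $(\psi_k)_{W_k}$ and the elementary estimate $\inf W_k\geq(\inf_{x}w(x))^{n_k}$ are mechanical, and the vanishing of the $\tilde\phi_K$ term is exactly the disjointness already exploited in Theorem~\ref{thmf}. The one step that needs genuine care is the \emph{simultaneous} choice of $n_k$: it must be a multiple of the period $N_k$ (so that $\varphi_k$ remains fixed by $C_{\alpha,w}^{(2n_k)^*}$), must exceed both $N$ and $n_{k-1}$ (for disjointness and strict monotonicity), and must be large enough that $\|S_{\alpha,w}^{*^{2n_k}}\varphi_k\|<4^{-k}$; I expect coordinating these three requirements, rather than any individual inequality, to be the only delicate point.
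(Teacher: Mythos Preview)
Your proposal is correct and follows exactly the approach the paper intends: the paper gives no separate proof of this theorem, writing only ``Similarly, one can prove the following result,'' in reference to Theorem~\ref{thmf}, and your argument is precisely the symmetric version of that proof (swapping $T_{\alpha,w}$ with $S_{\alpha,w}$, composing with $U_\alpha^{-n_k}$ instead of $U_\alpha^{n_k}$, and extracting $W_k$ rather than $W_k^{-1}$). If anything, you are more careful than the paper about the simultaneous choice of $n_k$ as a multiple of the period that also makes $\|S_{\alpha,w}^{*^{2n_k}}\varphi_k\|$ small.
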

%============================================================
\begin{corollary}
	Suppose that there is a compact subset $K$ of $X$ such that $\alpha^n(K)\cap K=\varnothing$ for all $n\geq N$.
	If $(C^{(n)^*}_{\alpha,w})_{n\in \mathbb{N}_0}$ is chaotic, then $\inf_{x\in X}w(x)<1<\sup_{x\in X}w(x)$.
\end{corollary}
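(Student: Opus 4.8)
The plan is to deduce this corollary from Theorem \ref{thmf} and the analogous (unlabelled) theorem stated just after it (the one whose hypothesis is $\lim_{n\to\infty}S_{\alpha,w}^{n^*}\varphi=0$ and whose conclusion is $\inf_x w(x)<1$), in exactly the way Corollary \ref{ttthm} was deduced from Corollaries \ref{cor1} and \ref{cor2} in the non-adjoint setting. First I would record that, by the definition of chaos, chaoticity of $(C^{(n)^*}_{\alpha,w})_{n\in\mathbb{N}_0}$ immediately gives that $\mathcal{P}((C^{(n)^*}_{\alpha,w})_{n\in\mathbb{N}_0})$ is dense in $\mathcal{F}'$. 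This supplies the density hypothesis common to both theorems, so the whole task reduces to verifying the two strong-operator-topology limit hypotheses and arguing by contraposition.

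To obtain $\sup_x w(x)>1$ I would suppose, for contradiction, that $\sup_x w(x)\le 1$. The norm estimate from the proof of Corollary \ref{cor2}, namely $\|T^n_{\alpha,w}f\|_{\mathcal F}\le(\sup_x w(x))^n\|f\|_{\mathcal F}$, gives $\|T_{\alpha,w}^{n^*}\|=\|T^n_{\alpha,w}\|\le(\sup_x w(x))^n$, whence $\lim_{n\to\infty}T_{\alpha,w}^{n^*}\varphi=0$ for every $\varphi\in\mathcal{F}'$. Theorem \ref{thmf} would then force $\sup_x w(x)>1$, contradicting the assumption. Symmetrically, using $\|S^n_{\alpha,w}f\|_{\mathcal F}\le(\inf_x w(x))^{-n}\|f\|_{\mathcal F}$ from the proof of Corollary \ref{cor1}, the assumption $\inf_x w(x)\ge 1$ would yield $\lim_{n\to\infty}S_{\alpha,w}^{n^*}\varphi=0$ for all $\varphi$, and the analogous theorem following Theorem \ref{thmf} would give $\inf_x w(x)<1$, again a contradiction.

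The delicate point, and the step I expect to be the main obstacle, is precisely the strictness of the two inequalities at the boundary. The norm bounds above produce $\lim_n T_{\alpha,w}^{n^*}\varphi=0$ only when $\sup_x w(x)<1$ is strict, and likewise $\lim_n S_{\alpha,w}^{n^*}\varphi=0$ only when $\inf_x w(x)>1$ is strict; at the critical values $\sup_x w(x)=1$ and $\inf_x w(x)=1$ one only gets $\|T_{\alpha,w}^{n^*}\|\le 1$, respectively $\|S_{\alpha,w}^{n^*}\|\le 1$, which is too weak to feed into the limit hypotheses of the two theorems. Thus the contraposition as stated rules out only $\sup_x w(x)<1$ and $\inf_x w(x)>1$, yielding the non-strict bounds $\inf_x w(x)\le 1\le\sup_x w(x)$ of the type seen in Corollary \ref{ttthm}. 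To recover the claimed strict inequalities I would either (i) refine the two theorems so that their conclusions follow already from mere norm-boundedness of $T_{\alpha,w}^{n^*}$ (resp. $S_{\alpha,w}^{n^*}$) together with density of the periodic vectors; inspecting the proof of Theorem \ref{thmf}, the vanishing of $T_{\alpha,w}^{*^{2n_k}}(\varphi_k)$ is the \emph{only} place the limit hypothesis enters, so I would try to control this term directly from the periodicity relation $C^{(n_k)^*}_{\alpha,w}(\varphi_k)=\varphi_k$ together with the disjointness $\alpha^n(K)\cap K=\varnothing$; or (ii) give an independent argument showing that the boundary values $\sup_x w(x)=1$ and $\inf_x w(x)=1$ are themselves incompatible with topological transitivity of $(C^{(n)^*}_{\alpha,w})_{n\in\mathbb{N}_0}$.
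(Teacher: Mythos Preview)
Your approach is exactly the paper's: bound $\|T_{\alpha,w}^{*^n}(\varphi)\|\le(\sup_x w(x))^n\|\varphi\|$ and $\|S_{\alpha,w}^{*^n}(\varphi)\|\le(\inf_x w(x))^{-n}\|\varphi\|$, and argue by contraposition via Theorem~\ref{thmf} and its companion. The paper's proof literally says ``if $\sup_{x\in X}w(x)<1$, then $\lim_n T_{\alpha,w}^{*^n}(\varphi)=0$ \ldots'' and ``if $\inf_{x\in X}w(x)>1$, then $\lim_n S_{\alpha,w}^{*^n}(\varphi)=0$ \ldots'', so it too only excludes the strict inequalities $\sup_x w(x)<1$ and $\inf_x w(x)>1$.

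Your worry about the boundary cases $\sup_x w(x)=1$ and $\inf_x w(x)=1$ is therefore not a defect of your proposal relative to the paper: the paper's own argument, as written, establishes only $\inf_x w(x)\le 1\le\sup_x w(x)$, the analogue of Corollary~\ref{ttthm}, and does not address the strictness claimed in the statement. So you have reproduced the paper's proof and, in addition, correctly located a gap that the paper itself does not fill. Neither of your suggested repairs (i) or (ii) is carried out in the paper.
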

%==============================================================
\begin{proof}
	For each $\varphi\in\mathcal{F}'$ we have 
	$$\|T_{\alpha,w}^{*^n}(\varphi)\|=\|\varphi_{\prod_{s=0}^{n-1}(w\circ\alpha^s)}\|\leq \|\varphi\|\,\sup_{x\in X}\prod_{s=0}^{n-1}(w\circ\alpha^s)(x)\leq \|\varphi\|\, (\sup_{x\in X}w(x))^n.$$
	Similarly, 
	$$\|S_{\alpha,w}^{*^n}(\varphi)\|\leq \|\varphi\|\, (\inf_{x\in X}w(x))^{-n}.$$
	So, if $\sup_{x\in X}w(x)<1$, then $\lim_{n\rightarrow\infty}T_{\alpha,w}^{*^n}(\varphi)=0$ for all $\varphi\in\mathcal{F}'$, and this contradicts Theorem \ref{thmf}.  Similarly, if $\inf_{x\in X}w(x)>1$, then $\lim_{n\rightarrow\infty}S_{\alpha,w}^{*^n}(\varphi)=0$ for all $\varphi\in\mathcal{F}'$, and we get a contradiction.
\end{proof}
%================================================
%{\bf Acknowledgments.} The authors would like to thank the reviewer for the careful reading and helpful suggestions.

\vspace{.1in}

\end{document}